\newtheorem{theorem}{Theorem}[section]
\newtheorem{proposition}[theorem]{Proposition}
\newtheorem{conjecture}[theorem]{Conjecture}
\newtheorem{corollary}[theorem]{Corollary}
\newtheorem{lemma}[theorem]{Lemma}
\theoremstyle{definition}
\newtheorem{remark}[theorem]{Remark}
\newtheorem{example}[theorem]{Example}
\newtheorem{definition}[theorem]{Definition}
\def\bb{\mathbf{b}}
\def\cc{\mathbf{c}}
\def\dd{\mathbf{d}}
\def\ee{\mathbf{e}}
\def\ff{\mathbf{f}}
\def\gg{\mathbf{g}}
\def\xx{\mathbf{x}}
\def\yy{\mathbf{y}}
\def\TT{\mathbb{T}}
\def\PP{\mathbb{P}}
\def\ZZ{\mathbb{Z}}
\def\Acal{\mathcal{A}}
\def\Fcal{\mathcal{F}}
\def\Xcal{\mathcal{X}}
\def\QQsf{\mathbb{Q}_{\text{sf}}}
\title[$f$-vectors and $d$-vectors in cluster algebras of finite type or rank $2$]{Relation between $f$-vectors and $d$-vectors \\in cluster algebras of finite type or rank 2}
\author{Yasuaki Gyoda}
\address{Graduate School of Mathematics, Nagoya University, Chikusa-ku, Nagoya, 464-8602 Japan}
\email{m17009g@math.nagoya-u.ac.jp}
\keywords{cluster algebra, mutation, f-vector, d-vector}
\subjclass[2010]{13F60}
\begin{document}
\maketitle
\begin{abstract} 
We study $f$-vectors, which are the maximal degree vectors of $F$-polynomials in cluster algebra theory. For a cluster algebra of finite type, we find that positive $f$-vectors correspond with $d$-vectors, which are exponent vectors of denominators of cluster variables. Furthermore, using this correspondence and properties of $d$-vectors, we prove that cluster variables in a cluster are uniquely determined by their $f$-vectors when the cluster algebra is of finite type or rank $2$.
\end{abstract} 
\section{Introduction and main theorems}
\emph{Cluster algebras} are commutative subalgebras of the rational function fields. They are generated by \emph{cluster variables} in \emph{clusters}, and cluster variables are obtained by applying \emph{mutations} repeatedly starting from the initial cluster. They are defined by \cite{fzi} to study the canonical basis or total positivity. Today, we know that they are related to many mathematical subjects. For example, by regarding a mutation as quiver transformation or triangulation of a marked surface, a structure of cluster algebras appears in representation theory of quivers \cites{bmrrt,birs} or higher Teichm\"uller theory \cites{fst,fg09}. Also, by considering a mutation of the Markov quiver, a new combinatorial approach to solve the Unicity Conjecture about Markov numbers was given in number theory \cites{cs,ros}.

Cluster algebras of \emph{finite type} and \emph{rank $2$} are important classes in cluster algebra theory.
Cluster algebras of finite type have finitely many cluster variables. They are introduced by \cite{fzi} and are classified completely by \cite{fzii}. They have connections with Dynkin diagrams or (real) root systems in Lie algebras, and they are applied to the logarithm identities, $T,Y$-systems and so on \cites{n,iikns,iikkn1,iikkn2}. 

Cluster algebras of rank 2 have two cluster variables in every cluster. Since they have the simplest structures in cluster algebras with infinitely many cluster variables, they are studied to understand other classes \cites{pz,ls,llz}.

The main topic of this paper is a relation between \emph{$f$-vectors} and \emph{$d$-vectors} in cluster algebras of finite type. Here, $f$-vectors are introduced in \cites{fk} as the maximal degree vectors of $F$-polynomials, and $F$-polynomials was introduced in \cite{fziv}. On the other hand, $d$-vectors are exponent vectors of monomials of denominators of cluster variables. They are introduced by \cites{fzii,fziv}. Though definitions of these two vectors are independent of each other, previous works \cites{fziv, rs,fg} suggested that they have some similar properties in cluster algebras of finite type or rank 2. In this paper, we give the following simple relation between $f$-vectors and $d$-vectors (Theorem \ref{main}):
\begin{align*}
\ff_{i;t}=[\dd_{i;t}]_+.
\end{align*}
This relation means that they are the same vectors in almost every situation. By this identification, we can study properties of $f$-vectors, which are not well-known yet, by properties of $d$-vectors. In this paper, we give a partial solution of the Uniqueness Conjecture \cite{gy}*{Conjecture 4.4}, that is, cluster variables in a cluster are uniquely determined by their $f$-vectors in cluster algebras of finite type or rank 2.

The organization of the paper is as follows: in the rest of this section, we define mutations, cluster algebras, $d$-vectors and $f$-vectors. After that, we describe the main theorem (Theorem \ref{main}) in the paper, that is, a simple relation between $d$-vectors and $f$-vectors in cluster algebras of finite type. Furthermore, we describe an application of the main theorem to the Uniqueness Conjecture (Theorem \ref{sub}). In Section 2, we give a proof of Theorem \ref{main}. In Section 3, we give a proof of Theorem \ref{sub} (1) by using Theorem \ref{main} and some properties of $d$-vectors. In Section 4, we give a proof of Theorem \ref{sub} (2) by using a description of entries of $d$-vectors. In Section 5, we generalize the cluster expansion formula given by \cite{llz} to the principal coefficients version along \cite{ls}, and we give the restoration formula of the $F$-polynomials from the $f$-vectors.

\subsection{Seed mutations and cluster algebras}
We start by recalling definitions of seed mutations and cluster patterns according to \cite{fziv}. A \emph{semifield} $\mathbb P$ is an abelian multiplicative group equipped with an addition $\oplus$ which is distributive over the multiplication. We particularly make use of the following two semifields.

Let $\mathbb Q_{\text{sf}}(u_1,\dots,u_{\ell})$ be the set of rational functions in $u_1,\dots,u_{\ell}$ which have subtraction-free expressions. Then, $\mathbb Q_{\text{sf}}(u_1,\dots,u_{\ell})$ is a semifield by the usual multiplication and addition. We call it the \emph{universal semifield} of $u_1,\dots,u_{\ell}$ (\cite{fziv}*{Definition 2.1}).

Let Trop$(u_1,\dots, u_\ell)$ be the abelian multiplicative group freely generated by the elements $u_1,\dots,u_\ell$. Then, $\text{Trop}(u_1,\dots,u_{\ell})$ is a semifield by the following addition: 
\begin{align}
\prod_{j=1}^\ell u_j^{a_j} \oplus \prod_{j=1}^{\ell} u_j^{b_j}=\prod_{j=1}^{\ell} u_j^{\min(a_j,b_j)}.
\end{align}
We call it the \emph{tropical semifield} of $u_1,\dots,u_\ell$ (\cite{fziv}*{Definition 2.2}).
For any semifield $\PP$ and $p_1, \dots, p_{\ell}\in\PP$, there exists a unique semifield homomorphism $\pi$ such that
\begin{align} \label{qsfuniv}
	\pi:\QQsf(y_1, \dots, y_{\ell}) &\longrightarrow \PP\\
	y_i &\longmapsto p_i. \nonumber
\end{align} 
For $F(y_1,\dots,y_\ell ) \in \QQsf(y_1, \dots, y_{\ell})$, we denote 
\begin{align}
	F|_{\PP}(p_1, \dots, p_{\ell}):=\pi(F(y_1, \dots, y_\ell)).
\end{align}
and we call it the \emph{evaluation} of $F$ at $p_1, \dots, p_{\ell}$.
We fix a positive integer $n$ and a semifield $\PP$. Let $\mathbb{ZP}$ be the group ring of $\mathbb{P}$ as a multiplicative group. Since $\mathbb{ZP}$ is a domain (\cite{fzi}*{Section 5}), its total quotient ring is a field $\mathbb{Q}(\mathbb P)$. Let $\mathcal{F}$ be the field of the rational functions in $n$ indeterminates with coefficients in $\mathbb{Q}(\mathbb P)$. 

A \emph{labeled seed with coefficients in $\PP$} is a triplet $(\mathbf{x}, \mathbf{y}, B)$, where
\begin{itemize}
\item $\mathbf{x}=(x_1, \dots, x_n)$ is an $n$-tuple of elements of $\mathcal F$ forming a free generating set of $\mathcal F$.
\item $\mathbf{y}=(y_1, \dots, y_n)$ is an $n$-tuple of elements of $\mathbb{P}$.
\item $B=(b_{ij})$ is an $n \times n$ integer matrix which is \emph{skew-symmetrizable}, that is, there exists a positive integer diagonal matrix $S$ such that $SB$ is skew-symmetric. Also, we call $S$ a \emph{skew-symmetrizer} of $B$.
\end{itemize}

We say that $\xx$ is a \emph{cluster} and refer to $x_i,y_i$ and $B$ as the \emph{cluster variables}, the \emph{coefficients} and the \emph{exchange matrix}, respectively.

Throughout the paper, for an integer $b$, we use the notation $[b]_+=\max(b,0)$. We note that
\begin{align}\label{eq:b--b}
b=[b]_+-[-b]_+.
\end{align}
Let $(\mathbf{x}, \mathbf{y}, B)$ be a labeled seed with coefficients in $\PP$, and let $k \in\{1,\dots, n\}$. The \emph{seed mutation $\mu_k$ in direction $k$} transforms $(\mathbf{x}, \mathbf{y}, B)$ into another labeled seed $\mu_k(\mathbf{x}, \mathbf{y}, B)=(\mathbf{x'}, \mathbf{y'}, B')$ defined as follows:
\begin{itemize}
\item The entries of $B'=(b'_{ij})$ are given by 
\begin{align} \label{eq:matrix-mutation}
b'_{ij}=\begin{cases}-b_{ij} &\text{if $i=k$ or $j=k$,} \\ 
b_{ij}+\left[ b_{ik}\right] _{+}b_{kj}+b_{ik}\left[ -b_{kj}\right]_+ &\text{otherwise.}
\end{cases}
\end{align}
\item The coefficients $\mathbf{y'}=(y'_1, \dots, y'_n)$ are given by 
\begin{align}\label{eq:y-mutation}
y'_j=
\begin{cases}
y_{k}^{-1} &\text{if $j=k$,} \\ 
y_j y_k^{[b_{kj}]_+}(y_k \oplus 1)^{-b_{kj}} &\text{otherwise.}
\end{cases}
\end{align}
\item The cluster variables $\mathbf{x'}=(x'_1, \dots, x'_n)$ are given by
\begin{align}\label{eq:x-mutation}
x'_j=\begin{cases}\dfrac{y_k\mathop{\prod}\limits_{i=1}^{n} x_i^{[b_{ik}]_+}+\mathop{\prod}\limits_{i=1}^{n} x_i^{[-b_{ik}]_+}}{(y_k\oplus 1)x_k} &\text{if $j=k$,}\\
x_j &\text{otherwise.}
\end{cases}
\end{align}
\end{itemize}

Let $\mathbb{T}_n$ be the \emph{$n$-regular tree} whose edges are labeled by the numbers $1, \dots, n$ such that the $n$ edges emanating from each vertex have different labels. We write 
\begin{xy}(0,1)*+{t}="A",(10,1)*+{t'}="B",\ar@{-}^k"A";"B" \end{xy} 
to indicate that vertices $t,t'\in \mathbb{T}_n$ are joined by an edge labeled by $k$. We fix a vertex $t_0\in \TT_n$, which is called the \emph{rooted vertex}.

A \emph{cluster pattern with coefficients in $\PP$} is an assignment of every labeled seed $\Sigma_t=(\mathbf{x}_t, \mathbf{y}_t,B_t)$ with coefficients in $\PP$ to every vertex $t\in \mathbb{T}_n$ such that the labeled seeds $\Sigma_t$ and $\Sigma_{t'}$ assigned to the endpoints of any edge 
\begin{xy}(0,1)*+{t}="A",(10,1)*+{t'}="B",\ar@{-}^k"A";"B" \end{xy} 
are obtained from each other by a seed mutation in direction $k$. Elements of $\Sigma_t$ are denoted as follows:
\begin{align} \label{den:seed_at_t}
\mathbf{x}_t=(x_{1;t},\dots,x_{n;t}),\ \mathbf{y}_t=(y_{1;t},\dots,y_{n;t}),\ B_t=(b_{ij;t}).
\end{align}
In particular, at $t_0$, we denote
\begin{align} \label{initialseed}
\mathbf{x}=\mathbf{x}_{t_0}=(x_1,\dots,x_n),\ \mathbf{y}=\mathbf{y}_{t_0}=(y_1,\dots,y_n),\ B=B_{t_0}=(b_{ij}).
\end{align}

When we want to emphasize that the initial matrix is $B$, we denote by $\Sigma^B_t$ a labeled seed associated with a vertex $t$. 
For seeds $\Sigma_t$ and $\Sigma_s$ in a cluster pattern, if there exists mutation sequence $\mu$ such that $\Sigma_s=\mu(\Sigma_t)$, then we say that $\Sigma_t$ is \emph{mutation equivalent} to $\Sigma_s$.
\begin{definition}
A \emph{cluster algebra} $\Acal$ associated with a cluster pattern $v\mapsto \Sigma_v$ is the $\ZZ\PP$-subalgebra of $\Fcal$ generated by $\{x_{i;t}\}_{1\leq i\leq n, t\in \TT_n}$.
 \end{definition}

The degree $n$ of the regular tree $\TT_n$ is called the \emph{rank} of $\Acal$, and $\Fcal$ is the \emph{ambient field} of $\Acal$. 

We also denote by $\mathcal{A}(\xx,\yy,B)$ a cluster algebra with the initial seed $(\xx,\yy,B)$. 
\begin{example}\label{A2}
We give an example of cluster algebras. This example is based on \cite{fziv}*{Example 2.10} (but it is different from \cite{fziv} with respect to the way of labeling edges). 
Let $n=2$, and we consider a tree $\TT_2$ whose edges are labeled as follows:
\begin{align}\label{A2tree}
\begin{xy}
(-10,0)*+{\dots}="a",(0,0)*+{t_0}="A",(10,0)*+{t_1}="B",(20,0)*+{t_2}="C", (30,0)*+{t_3}="D",(40,0)*+{t_4}="E",(50,0)*+{t_5}="F", (60,0)*+{\dots}="f"
\ar@{-}^{2}"a";"A"
\ar@{-}^{1}"A";"B"
\ar@{-}^{2}"B";"C"
\ar@{-}^{1}"C";"D" 
\ar@{-}^{2}"D";"E"
\ar@{-}^{1}"E";"F" 
\ar@{-}^{2}"F";"f" 
\end{xy}.
\end{align}
Let $B=\begin{bmatrix}
 0 & 1 \\
 -1 & 0
\end{bmatrix}
$ be the initial exchange matrix at $t_0$.
Then coefficients and cluster variables are given by Table 1.
\begin{table}[ht]
\caption{Coefficients and cluster variables in type~$A_2$}
$
\begin{array}{|c|cc|cc|}
\hline
&&&&\\[-4mm]
t& \hspace{25mm}\yy_t &&& \xx_t \hspace{30mm}\\
\hline
&&&&\\[-3mm]
0 &y_1 & y_2& x_1& x_2 \\[1mm]
\hline
&&&&\\[-3mm]
1& \dfrac{1}{y_1} & \dfrac{y_1y_2}{y_1\oplus 1} & \dfrac{x_2+y_1}{(y_1\oplus 1)x_1}& x_2 \\[3mm]
\hline
&&&&\\[-3mm]
2& \dfrac{y_2}{y_1y_2\oplus y_1\oplus 1} & \dfrac{y_1\oplus 1}{y_1y_2} & \dfrac{x_2+y_1}{(y_1\oplus 1)x_1} & \dfrac{x_1y_1y_2+y_1+x_2}{(y_1y_2\oplus y_1\oplus 1)x_1x_2} \\[3mm]
\hline
&&&&\\[-3mm]
3& \dfrac{y_1y_2\oplus y_1\oplus1}{y_2} & \dfrac{1}{y_1(y_2\oplus 1)} & \dfrac{x_1y_2+1}{(y_2\oplus 1)x_2} & \dfrac{x_1y_1y_2+y_1+x_2}{(y_1y_2\oplus y_1\oplus 1)x_1x_2} \\[3mm]
\hline
&&&&\\[-2mm]
4& \dfrac{1}{y_2} &y_1(y_2\oplus 1) & \dfrac{x_1y_2+1}{(y_2\oplus 1)x_2} & x_1 \\[3mm]
\hline
&&&&\\[-2mm]
5& y_2 & y_1 & x_2 & x_1\\[1mm]
\hline
\end{array}
$
\end{table}

Therefore, we have
\begin{align*}
\Acal(\xx,\yy,B)=\ZZ\PP\left[x_1,x_2,\dfrac{x_2+y_1}{(y_1\oplus 1)x_1},\dfrac{x_1y_1y_2+y_1+x_2}{(y_1y_2\oplus y_1\oplus 1)x_1x_2}, \dfrac{x_1y_2+1}{(y_2\oplus 1)x_2}\right].
\end{align*}
\end{example}

Next, in order to define classes of cluster algebras which we deal with in this paper, we define non-labeled seeds according to \cite{fziv}. For a cluster pattern $v\mapsto \Sigma_v$, we introduce the following equivalence relations of labeled seeds: we say that \begin{align*}
\Sigma_t=(\xx_t, \yy_t, B_t),\quad \xx_t=(x_{1;t,}\dots,x_{n;t}),\quad
\yy_t=(y_{1;t},\dots,y_{n;t}),\quad B_t=(b_{ij;t})
\end{align*}
and
\begin{align*}
\Sigma_s=(\xx_{s}, \yy_{s}, B_{s}),\quad \xx_s=(x_{1;s},\dots,x_{n;s}),\quad
\yy_s=(y_{1;s},\dots,y_{n;s}),\quad B_s=(b_{ij;s})
\end{align*}
are \emph{equivalent} if there exists a
permutation~$\sigma$ of indices~$1, \dots, n$ such that
\begin{align*}
x_{i;s} = x_{\sigma(i);t}, \quad y_{j;s} = y_{\sigma(j);t}, \quad
b_{ij;s} = b_{\sigma(i), \sigma(j);t}
\end{align*}
for all~$i$ and~$j$.
We denote by $[\Sigma]$ the equivalent classes represented by a labeled seed
$\Sigma$ and call it \emph{non-labeled seed}. Also, We define a \emph{(non-labeled) clusters} $[\xx]$ as the set ignored the order of a labeled cluster. Abusing notation, we abbreviate $[\xx]$ to $\xx$. 

\begin{definition}
The \emph{exchange graph} of a cluster algebra is the regular connected graph whose vertices are non-labeled seeds of the cluster pattern and whose edges connect non-labeled seeds related by a single mutation.
\end{definition}
Using the exchange graph, we define cluster algebras of finite type.
\begin{definition}
A cluster algebra $\Acal$ is of \emph{finite type} if the exchange graph of $\Acal$ is a finite graph.
\end{definition}
We say that $B$ is \emph{bipartite} if there is a function $\varepsilon:\{1,\dots,n\} \rightarrow\{1,-1\}$ such that for all $i$ and $j$, 
\begin{align}\label{bipartitecondition}
b'_{ij}>0 \Rightarrow \begin{cases}
\varepsilon(i)=1,\\
\varepsilon(j)=-1.
\end{cases}
\end{align}.

For an exchange matrix $B$, we define $A(B)=(a_{ij})$ as 
\begin{align*}
a_{ij}=\begin{cases}
2\quad &\text{if } i=j;\\
-|b'_{ij}| & \text{if } i\neq j.
\end{cases}
\end{align*}
If $A(B)$ is a Cartan matrix, then we say that $B$ is of \emph{finite Cartan type}.

\begin{remark}\label{cartanexchange}
If $\mathcal{A}=\mathcal{A}(\xx,\yy,B)$ is of finite type, then the initial matrix $B$ is mutation equivalent to a bipartite matrix $B'$. Furthermore, by permuting their indices appropriately, we can choose $B'$ as one of finite Cartan type (see \cite{fzii}*{Theorem 1.8, Theorem 7.1}). If the initial matrix $B$ of $\Acal$ is mutation equivalent to $B'$ which is finite Cartan $X_n$ type, then there exists a bijection between almost positive roots of $X_n$ type and cluster variables of $\Acal$ (see \cite{fzii}*{Theorem 1.9}). 
\end{remark}

\subsection{$d$-vectors and $f$-vectors} 
In this subsection, we define $d$-vectors and $f$-vectors. First, we define $d$-vectors according to \cites{fzii,fziv}. 

Let $\Acal$ be a cluster algebra.
By the \emph{Laurent phenomenon} \cite{fziv}*{Theorem 3.5}, every cluster variable
$x_{i;t} \in \Acal$ can be uniquely written as
\begin{align}\label{eq:Laurent-normal-form}
x_{i;t} = \frac{N_{i;t}(x_1, \dots, x_n)}{x_1^{d_{1i;t}} \cdots x_n^{d_{ni;t}}},\quad d_{ki;t}\in \ZZ,
\end{align}
where $N_{i;t}(x_1, \dots, x_n)$ is a polynomial with coefficients in~$\ZZ \PP$
which is not divisible by any initial cluster variable~$x_i\in\xx$.
We define the \emph{$d$-vector} $\dd_{j;t}$ as the degree vector of $x_{j;t}$, that is, 
\begin{align}
\dd_{i;t}^{B;t_0}=\dd_{i;t}=\begin{bmatrix}d_{1i;t}\\ \vdots \\ d_{ni;t} \end{bmatrix}.
\end{align}
We define a \emph{$D$-matrix} $D_t^{B;t_0}$ as 
\begin{align}
D_t^{B;t_0}:=(\dd_{1;t},\dots,\dd_{n;t}).
\end{align}
We remark that
$\dd_{i;t}$ is independent of the choice of $\PP$ (see \cite{fziv}*{Section 7}). Therefore, in a cluster algebra $\Acal$, if $x_{i;t}=x_{j;s}$, then we have $\dd_{i;t}=\dd_{j;s}$.
Moreover, $d$-vectors are also given by the following recursion: 
for any $i\in\{1,\dots,n\}$,
\begin{align*}
\dd_{i;t_0}=-\mathbf{e}_i,
\end{align*}
and for any \begin{xy}(0,1)*+{t}="A",(10,1)*+{t'}="B",\ar@{-}^k"A";"B" \end{xy}, 
\begin{align}\label{dvectorrecursion}
\mathbf{d}_{i;{t'}}&=\begin{cases}
\mathbf{d}_{i;t} \ \ & \text{if } i\neq k;\\
-\mathbf{d}_{k;t}+\max \left(\mathop{\sum}\limits_{j=1}^n[b_{jk;t}]_+\mathbf{d}_{j;t},\ +\mathop{\sum}\limits_{j=1}^n[-b_{jk;t}]_+\mathbf{d}_{j;t}\right)\ \ &\text{if } i=k,
\end{cases}
\end{align}
where $\ee_i$ is a standard basis element and the operation $\max$ on vectors are performed component-wise. By this way of definition, since $d$-vectors depend only on exchange matrices, we can regard $d$-vectors as vectors associated with vertices of $\TT_n$.

Next, we define $f$-vectors according to \cite{fg}. We will give some preparations.

We say that a cluster pattern $v\mapsto \Sigma_v$ (or a cluster algebra $\Acal$) has the \emph{principal coefficients} at the initial vertex $t_0$ if $\mathbb{P}=\text{Trop}(y_1,\dots,y_n)$ and $\mathbf{y}_{t_0}=(y_1,\dots,y_n)$. In this case, we denote $\Acal=\Acal_\bullet(B)$. For any $\Acal_\bullet(B)$ whose rank is $n$, any $t\in\TT_n$ and $i\in\{1,\dots,n\}$, we define the \emph{$F$-polynomial} $F^{B;t_0}_{i;t}(\yy)$ as 
\begin{align}
F^{B;t_0}_{i;t}(\yy)=x_{i;t}(x_1,\dots,x_n)|_{x_1=\cdots=x_n=1},
\end{align}
where $x_{i;t}(x_1,\dots,x_n)$ means the expression of $x_{i;t}$ by $x_1,\dots,x_n$. 

Using $F$-polynomials, we define $f$-vectors. Let $\Acal_{\bullet}(B)$ be a cluster algebra with principal coefficients at $t_0$. We denote by $f_{ij;t}$ the maximal degree of $y_i$ in $F_{j;t}^{B;t_0}(\yy)$. Then, we define the \emph{$f$-vector} $\ff_{i;t}$ as

\begin{align}
\ff_{i;t}^{B;t_0}=\ff_{i;t}=\begin{bmatrix}f_{1i;t}\\ \vdots \\ f_{ni;t} \end{bmatrix}.
\end{align}
We define the \emph{$F$-matrix} $F_t^{B;t_0}$ as 
\begin{align}
F_t^{B;t_0}:=(\ff_{1;t},\dots,\ff_{n;t}).
\end{align}
\begin{remark}\label{independentf}
For $\mathbf{b}=\begin{bmatrix}
b_1\\
\vdots\\
b_n
\end{bmatrix}$, we denote $[\mathbf{b}]_+=\begin{bmatrix}
[b_1]_+\\
\vdots\\
[b_n]_+
\end{bmatrix}$. By \cite{fg}*{Proposition 2.7}, $f$-vectors are the same as those defined by the following recursion: 
for any $i\in\{1,\dots,n\}$,
\begin{align*}
\ff_{i;t_0}=\mathbf{0},
\end{align*}
and for any \begin{xy}(0,1)*+{t}="A",(10,1)*+{t'}="B",\ar@{-}^k"A";"B" \end{xy}, 
\begin{align}\label{f-recursion}
\mathbf{f}_{i;{t'}}&=\begin{cases}
\mathbf{f}_{i;t} \ \ & \text{if } i\neq k;\\
-\mathbf{f}_{k;t}+\max \left([\mathbf{c}_{k;t}]_++\mathop{\sum}\limits_{j=1}^n[b_{jk;t}]_+\mathbf{f}_{j;t},\ [-\mathbf{c}_{k;t}]_++\mathop{\sum}\limits_{j=1}^n[-b_{jk;t}]_+\mathbf{f}_{j;t}\right)\ \ &\text{if } i=k,
\end{cases}
\end{align}
where $\cc_{i;t}$ is a \emph{$c$-vector}, which is defined by the following recursion:
For any $i\in\{1,\dots,n\}$
\begin{align*}
\cc_{i;t_0}=\ee_i,
\end{align*}
and for any \begin{xy}(0,1)*+{t}="A",(10,1)*+{t'}="B",\ar@{-}^k"A";"B" \end{xy}, 
\begin{align*}
\cc_{i;t'} =
\begin{cases}
-\cc_{i;t} & \text{if $i\neq k$;} \\[.05in]
\cc_{i;t} + [b_{ki;t}]_+ \ \cc_{k;t} +b_{ki;t} [-\cc_{k;t}]_+
 & \text{if $i=k$}.
 \end{cases}
\end{align*}

By this way of definition, since $f$-vectors depend only on exchange matrices, we can regard $f$-vectors as vectors associated with vertices of $\TT_n$. In this case, we remark that $f$-vectors are independent of the choice of coefficient system. So do $F$-matrices.
Furthermore, when we define $f$-vectors as these recursions, we have the following property: in a cluster algebra $\Acal$, if $x_{i;t}=x_{j;s}$, then we have $\ff_{i;t}=\ff_{j;s}$. It follows from \cite{cl2}*{Proposition 3 (i)} immediately.
\end{remark}
Since we know that $d$-vectors and $f$-vectors depend only on $B$ by above discussion, we abbreviate a cluster algebra $\Acal(\xx,\yy,B)$ to $\Acal(B)$ when we discuss properties of $d$-vectors or $f$-vectors.
\begin{example} Let $\Acal(B)$ be a cluster algebra given in Example \ref{A2}. Then $F$-polynomials, $F$-matrices, and $D$-matrices are given by Table \ref{A2FD}.
\begin{table}[ht]
\caption{$F$-polynomials, $F$- and $D$-matrices in type~$A_2$\label{A2FD}}
$
\begin{array}{|c|cc|c|c|}
\hline
&&&&\\[-4mm]
t& \hspace{0mm}F^{B; t_0}_{1;t}(\yy)&F^{B; t_0}_{2;t}(\yy) & F^{B; t_0}_t \hspace{0mm} & D^{B;t_0}_t \hspace{0mm}\\
\hline
&&&&\\[-3mm]
0& 1&1 
& \begin{bmatrix}
 0 & 0 \\
 0 & 0
\end{bmatrix}
&\begin{bmatrix}
 -1 & 0 \\
 0 & -1
\end{bmatrix}
 \\[4mm]
\hline
&&&&\\[-3mm]
1&y_1+1&1
&\begin{bmatrix}
 1 & 0 \\
 0 & 0
\end{bmatrix}
&\begin{bmatrix}
 1& 0 \\
 0 & -1
\end{bmatrix}\\[4mm]
\hline
&&&&\\[-3mm]
2& y_1+1&y_1y_2+y_1+1
&\begin{bmatrix}
 1 & 1 \\
 0 & 1
\end{bmatrix}
&\begin{bmatrix}
 1 & 1 \\
 0 & 1
\end{bmatrix}\\[4mm]
\hline
&&&&\\[-3mm]
3& y_2+1&y_1y_2+y_1+1
&\begin{bmatrix}
 0 & 1 \\
 1 & 1
\end{bmatrix}
&\begin{bmatrix}
 0 & 1 \\
 1 & 1
\end{bmatrix}\\[4mm]
\hline
&&&&\\[-3mm]
4& y_2+1&1
&\begin{bmatrix}
 0 & 0 \\
 1& 0
\end{bmatrix}
&\begin{bmatrix}
 0 & -1 \\
 1 & 0
\end{bmatrix} \\[4mm]
\hline
&&&&\\[-3mm]
5& 1&1
&\begin{bmatrix}
 0 & 0 \\
 0 & 0
\end{bmatrix}
&\begin{bmatrix}
 0 & -1 \\
 -1 & 0
\end{bmatrix}\\[4mm]
\hline
\end{array}
$
\end{table}
\end{example}

 We are ready to describe the main results in this paper.

\subsection{Main results}
The main result of this paper is the following theorem:
\begin{theorem}\label{main}
In a cluster algebra $\Acal(B)$ of finite type, for any $i\in\{1,\dots,n\}$ and $t\in\TT_n$, we have the following relation:
\begin{align}\label{fdeq}
\ff_{i;t}=[\dd_{i;t}]_+.
\end{align}
\end{theorem}

It is known that Theorem \ref{main} holds under the condition that the initial matrix $B$ is bipartite by combining Corollary 10.10 and Proposition 11.1 (1) in \cite{fziv}. When $B$ is a skew-symmetric matrix, Theorem \ref{main} has already proved by using 2-Carabi-Yau categories (see \cite{fk}*{Proposition 6.6}). We remove these conditions.
\begin{remark}\label{f=drank2}
In the case that $\Acal(B)$ is of rank 2, we have \eqref{fdeq} by combining Corollary 10.10 and Proposition 11.1 (1) in \cite{fziv}. If $\Acal$ is of neither finite type nor rank 2, Theorem \ref{main} does not hold generally. A counterexample is given by \cite{fk}*{Section 6.4} . 
\end{remark}

We give an application of Theorem \ref{main}. Let us introduce the \emph{Uniqueness Conjecture} in \cite{gy}:

\begin{conjecture}[\cite{gy}*{Conjecture 4.4}]\label{conjF}
In a cluster algebra $\Acal(B)$, for $t,s\in\TT_n$, $F_t^{B;t_0}=F_s^{B;t_0}$ implies that $\xx_t$ and $\xx_s$ are the same non-labeled cluster.
\end{conjecture}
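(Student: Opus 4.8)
The plan is to pass from $f$-vectors to $d$-vectors through Theorem~\ref{main} and then exploit what is already known about how $d$-vectors encode clusters. Since \eqref{fdeq} is available only in finite type and in rank $2$, the method below reaches exactly those two cases, which is the content of the partial solution announced in the introduction; the conjecture in full generality lies beyond this approach. The first move is purely formal: by Theorem~\ref{main}, the hypothesis $F_t^{B;t_0}=F_s^{B;t_0}$ is equivalent to $[\dd_{i;t}]_+=[\dd_{i;s}]_+$ for every $i$, that is, the two $D$-matrices have the same entrywise positive part. The goal is to upgrade this agreement of positive parts to the equality of $\xx_t$ and $\xx_s$ as non-labeled clusters.

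Next I would sort the columns using the sign behaviour of $d$-vectors. In finite type and rank $2$ every non-initial cluster variable has a non-negative $d$-vector, while each initial cluster variable has $d$-vector $-\ee_j$; consequently $[\dd_{i;t}]_+$ equals $\dd_{i;t}$ on non-initial variables and equals $\mathbf 0$ on initial ones, so $\ff_{i;t}=\mathbf 0$ marks precisely the initial cluster variables. Hence for every index $i$ with $\ff_{i;t}\neq\mathbf 0$ we get $\dd_{i;t}=[\dd_{i;t}]_+=[\dd_{i;s}]_+=\dd_{i;s}$, and because a non-initial cluster variable is determined by its $d$-vector (the bijection with positive roots in finite type, the explicit description of the entries in rank $2$), this yields $x_{i;t}=x_{i;s}$. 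Thus the two clusters share the same \emph{non-initial part}, matched columnwise.

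What remains are the columns with $\ff_{i;t}=\mathbf 0$, and this is the heart of the matter: the operation $[\,\cdot\,]_+$ collapses every initial $d$-vector $-\ee_j$ to $\mathbf 0$, so the $f$-vectors record how many initial variables appear but not which ones. To recover them I would show that the initial part of a cluster is forced by its non-initial part, which we have just seen to coincide for $\xx_t$ and $\xx_s$. In finite type this is the cluster-complex statement that an initial variable $x_j$, viewed as the almost positive root $-\alpha_j$, is compatible with a collection $P$ of non-initial variables exactly when $\alpha_j$ lies outside the supports of all roots in $P$; by maximality of clusters, the initial part of a cluster is then exactly the set of such $x_j$, hence determined by $P$. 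In rank $2$ I would instead use the explicit $d$-vectors along the path $\TT_2$ to read off directly which initial variable is adjacent to a given non-initial part. Either way the common non-initial part forces a common initial part, and therefore $\xx_t=\xx_s$ as non-labeled clusters.

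The step I expect to be genuinely delicate is this last one. Everything up to it is bookkeeping once \eqref{fdeq} and the nonnegativity of the $d$-vectors of non-initial variables are in hand; but reconstructing the identities of the initial cluster variables from data that has deliberately forgotten them needs the structural input above, and it is precisely here that the hypothesis of finite type or rank $2$ is used in an essential way.
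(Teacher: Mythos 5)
Your overall architecture coincides with the paper's: translate $F_t^{B;t_0}=F_s^{B;t_0}$ into equality of the positive parts of the $D$-matrices via Theorem \ref{main}, use the non-negativity and non-vanishing of non-initial $d$-vectors to identify the zero $f$-vectors with the initial variables, recover the non-initial variables from their $d$-vectors, and finally reconstruct the initial part of the cluster from its non-initial part. The rank-$2$ half is fine as sketched and is exactly what Proposition \ref{ddetect2} does.

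The gap is in the finite-type version of your last step (and, less visibly, of your third step). The dictionary you invoke --- $d$-vectors are almost positive roots, $-\alpha_j$ is compatible with $\beta$ iff $\alpha_j\notin\operatorname{supp}\beta$, clusters are maximal compatible sets --- is established in \cite{fzii} only for the distinguished initial seed whose exchange matrix is bipartite with finite Cartan companion. The conjecture concerns an arbitrary initial vertex $t_0$, and the $d$-vectors with respect to a general initial seed are not the almost positive roots of that dictionary: the strong isomorphism of Remark \ref{cartanexchange} matches cluster variables, not initial seeds, so it does not transport denominator vectors. Hence both ``a non-initial cluster variable is determined by its $d$-vector'' and ``$-\ee_j$ belongs to a cluster iff the $j$th entry of every non-negative $d$-vector in it vanishes'' require a separate argument for non-bipartite $B$. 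The paper fills exactly these two holes: the first by a counting argument combining Proposition \ref{bijspcase} with Proposition \ref{cardinary} (Corollary \ref{duniquely}), the second not via the cluster complex but via the duality $D_t^{B;t_0}=(D_{t_0}^{B_t^T;t})^T$ of Lemma \ref{FDdual}, which forces the row and column through any $-1$ entry to vanish elsewhere, so that (since no column of a $D$-matrix is zero) the indices $i$ with $-\ee_i$ in the cluster are precisely those at which all non-negative columns vanish (Proposition \ref{ddetect}). If you replace your compatibility argument by this duality, or by a compatibility-degree statement proved for arbitrary initial seeds, your proof closes; as written, the finite-type case is only justified when $t_0$ carries the bipartite Cartan seed.
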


This conjecture is also studied in viewpoint of representation theory of algebras. An $f$-vectors are a dimension vector of the corresponding indecomposable $\tau$-rigid module over an appropriate 2-Calabi-Yau tilted algebras in additive categorification by 2-Calabi-Yau categories. By using the correspondences, Conjecture \ref{conjF} is equivalent to the following problem: support $\tau$-tilting modules are uniquely determined by the set of dimension vectors of these indecomposable direct summands. This problem was solved in the case of skew-symmetric cluster algebras of finite type \cites{gp,r}, skew-symmetric cluster algebras of affine type \cite{fg17}, and cluster algebras of $C_n$ Dynkin type \cite{fgl}.

In the case that $\Acal$ is of (skew-symmetrizable) finite type or rank $2$, we prove Conjecture \ref{conjF} by showing the following statement:
\begin{theorem}\label{sub}
\begin{itemize}
\item[(1)]
In a cluster algebra of finite type, for any $t,s\in \TT_n$, if $(\ff_{1;t},\dots,\ff_{n;t})$ coincides with $(\ff_{1;s},\dots,\ff_{n;s})$ up to order, then $\xx_t$ and $\xx_s$ are the same non-labeled cluster.
\item[(2)]
In a cluster algebra of rank $2$, for $t,s\in\TT_2$, if $(\ff_{1;t},\ff_{2;t})$ coincides with $(\ff_{1;s},\ff_{2;s})$ up to order, then $\xx_t$ and $\xx_s$ are the same non-labeled cluster.
\end{itemize}
\end{theorem}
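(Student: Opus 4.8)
\section*{Proof proposal for Theorem \ref{sub}}

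The plan is to reduce the uniqueness statement for $f$-vectors to the corresponding uniqueness statement for $d$-vectors, which is already available (or more tractable) in the literature, and then to transport it back via Theorem \ref{main}. The first step is to observe that Theorem \ref{main} gives, in both the finite type and rank $2$ settings, the pointwise identity $\ff_{i;t}=[\dd_{i;t}]_+$. Thus the multiset of $f$-vectors of a cluster determines the multiset of the \emph{positive parts} of its $d$-vectors. If I can show that the positive part $[\dd_{i;t}]_+$ retains enough information to recover $\dd_{i;t}$ itself, then the two multisets $\{\ff_{i;t}\}$ and $\{\ff_{i;s}\}$ agreeing up to order forces $\{\dd_{i;t}\}$ and $\{\dd_{i;s}\}$ to agree up to order, and I can invoke the known uniqueness theorem for $d$-vectors: clusters are determined by their multiset of $d$-vectors in finite type (\cite{fzii}) and in rank $2$.

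The key intermediate step, therefore, is controlling when $[\dd_{i;t}]_+$ fails to equal $\dd_{i;t}$, i.e.\ when a $d$-vector has a strictly negative entry. The only $d$-vectors with negative entries are the initial ones $\dd_{i;t_0}=-\ee_i$ (and their translates corresponding to clusters sharing an initial variable): once a cluster variable is non-initial, its $d$-vector is nonnegative, so $[\dd_{i;t}]_+=\dd_{i;t}$ and the $f$-vector literally equals the $d$-vector. I would first dispose of the case where both $\xx_t$ and $\xx_s$ consist entirely of non-initial cluster variables, where the argument above applies verbatim. The remaining cases are where one or both clusters contain an initial cluster variable $x_j$: here $\ff=[\,-\ee_j\,]_+=\mathbf 0$, so a zero $f$-vector flags (the possibility of) an initial variable. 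I would argue that a zero $f$-vector occurs \emph{only} for initial cluster variables --- this uses that $F$-polynomials of non-initial variables have positive degree in some $y_i$, a standard fact --- so the number of initial variables in $\xx_t$ equals the number of zero vectors among $\{\ff_{i;t}\}$, matching that of $\xx_s$. After matching and removing the initial variables (which are literally equal, being among $x_1,\dots,x_n$), the positivity of the remaining $d$-vectors lets me conclude as before.

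For the rank $2$ case I expect a slightly more hands-on treatment, since ``finite type uniqueness of $d$-vectors'' is not available in the infinite setting; here I would instead use the explicit combinatorics of rank $2$ clusters. In rank $2$ every cluster is $\{x_{m;t},x_{m+1;t}\}$ for consecutive indices along the path \eqref{A2tree}, and the $d$-vectors of distinct non-initial cluster variables are pairwise distinct and nonnegative, with the sequence of $d$-vectors strictly increasing away from $t_0$ in a suitable sense. Since $\ff=\dd$ for all these variables by Theorem \ref{main}(2), an equality of $f$-vector multisets pins down the two indices of the cluster, hence the cluster itself; the initial-variable bookkeeping is as above.

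The main obstacle I anticipate is the passage from ``equality of positive parts $[\dd_{i;t}]_+$'' to ``equality of $d$-vectors,'' i.e.\ ruling out that a negative-entry $d$-vector in one cluster is matched against a genuinely nonnegative $d$-vector in the other. This is exactly what the zero-$f$-vector $\Leftrightarrow$ initial-variable dichotomy is designed to handle, and making that dichotomy airtight --- in particular verifying that \emph{no} non-initial cluster variable can have a vanishing $f$-vector, and that the positive parts of the non-initial $d$-vectors are all distinct --- is where the real work lies. Once that is secured, both parts follow by combining Theorem \ref{main} with the respective uniqueness-of-$d$-vectors input.
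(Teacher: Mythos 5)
Your overall strategy coincides with the paper's: use Theorem \ref{main} to convert $f$-vectors into positive parts of $d$-vectors, use that non-initial $d$-vectors are non-negative and non-zero (so non-zero $f$-vectors equal $d$-vectors and determine their cluster variables uniquely), and isolate the zero $f$-vectors as the initial variables. However, there is a genuine gap at the step where you write that, after matching the counts of zero $f$-vectors, the initial variables in $\xx_t$ and $\xx_s$ ``are literally equal, being among $x_1,\dots,x_n$.'' A zero $f$-vector only tells you that the corresponding $d$-vector is \emph{some} $-\ee_i$; it does not tell you \emph{which} $i$. A priori two distinct clusters could share all their non-initial variables and differ only in which initial variables they contain (for instance two adjacent clusters whose exchanged variables are both initial), and then the multisets of $f$-vectors would still agree. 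Counting the zero vectors does not rule this out, and nothing in your proposal does.

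The paper closes exactly this hole with a separate argument (its Proposition \ref{ddetect}): using the duality $D_t^{B;t_0}=(D_{t_0}^{B_t^T;t})^T$, the transpose of a $D$-matrix of a finite type cluster algebra is again such a $D$-matrix, so an entry $-1$ forces the rest of its row and column to vanish; combined with the fact that no column of a $D$-matrix is $\mathbf 0$, the indices $i$ at which \emph{all} the non-negative $d$-vectors of the cluster vanish are precisely the indices for which $-\ee_i$ occurs in that cluster. Hence the non-zero $f$-vectors determine the initial $d$-vectors present, and Corollary \ref{duniquely} finishes the proof. In rank $2$ the analogous statement (Proposition \ref{ddetect2}) is proved by the explicit observation that initial variables occur only in $\Sigma_0$ and $\Sigma_{\pm1}$, together with $\dd_{2;1}=\begin{bmatrix}0\\1\end{bmatrix}\neq\begin{bmatrix}1\\0\end{bmatrix}=\dd_{1;-1}$. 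You would need to supply such an argument. A secondary point: the uniqueness of cluster variables by their $d$-vectors in finite type is not simply quotable for an arbitrary (non-bipartite) initial seed; the paper derives it by a counting argument combining the bijection with almost positive roots in the bipartite case with the fact that the number of distinct $d$-vectors depends only on the Dynkin type, and in rank $2$ it uses the greedy-element expansion formula.
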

Theorem \ref{sub} is a theorem of slightly stronger form than Conjecture \ref{conjF}. In Conjecture \ref{conjF}, the order of the f-vectors is fixed, but in Theorem \ref{sub}, it is not.
\begin{remark}
In the case of cluster algebras of $A_n$ or $D_n$ type, Theorem \ref{sub} has already been proved by using marked surfaces \cite{gy}*{Corollary 4.8}.
\end{remark}
\subsection*{Acknowledgement}
The author would like to express his gratitude to Bernhard Keller for insightful comments about Theorem \ref{main}. The author appreciates important remarks about Conjecture \ref{conjF} by Changjian Fu. Toshiya Yurikusa gives helpful advice about Theorem \ref{sub}.
The author received generous support from Tomoki Nakanishi. The author also thanks Haruhisa Enomoto, Yoshiki Aibara, and Naohiro Tsuzu. This work was supported by JSPS KAKENHI Grant number JP20J12675.
\section{Proof of Theorem \ref{main}}
In this section, we will prove Theorem \ref{main}. We start with proving the special case. 
For any cluster pattern $v\mapsto \Sigma_v$, we fix a seed $\Sigma_s$ such that $B_s$ is bipartite. We define the \emph{source mutation} $\mu_+$ and the \emph{sink mutation} $\mu_-$ as 
\begin{align}
\mu_+=\prod_{\varepsilon(k)=1}\mu_k,\quad \mu_-=\prod_{\varepsilon(k)=-1}\mu_k,
\end{align}
where $\varepsilon$ is the sign induced by the bipartite matrix $B_s$ (see \eqref{bipartitecondition}). The \emph{bipartite belt} induced by $\Sigma_s$ consists of seeds $\Sigma_t$ satisfying the following condition: there exists a mutation sequence $\mu$ consisting of $\mu_+$ and $\mu_-$ such that $\Sigma_t=\mu(\Sigma_s)$.

\begin{remark}
Definition of a bipartite belt in this paper is a generalised version of \cite{fziv}*{Definition 8.2}. We do not assume that the initial exchange matrix $B$ is bipartite. A bipartite belt in \cite{fziv} corresponds with that induced by the initial bipartite seed $\Sigma_{t_0}$ in this paper.
\end{remark}
\begin{lemma}[{\cite{fziv}*{Corollary 10.10}}]\label{bipartitebeltcase}
In any cluster algebra, if the initial matrix $B$ is bipartite and $\Sigma_t$ belongs to the bipartite belt induced by $\Sigma_{t_0}$, then we have \eqref{fdeq}.
\end{lemma}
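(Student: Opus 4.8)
The plan is to prove \eqref{fdeq} for every seed on the bipartite belt by induction on the number of belt steps $\mu_\pm$ separating $\Sigma_t$ from $\Sigma_{t_0}$, comparing the piecewise-linear recursion \eqref{dvectorrecursion} for the $d$-vectors with the recursion for the $f$-vectors from Remark \ref{independentf}. Because both $\dd_{i;t}$ and $\ff_{i;t}$ are independent of the coefficient system, I would fix principal coefficients throughout, so that the $c$-vectors $\cc_{i;t}$ occurring in the $f$-recursion are genuine exponent data at my disposal. The base case is immediate: at $t_0$ we have $\ff_{i;t_0}=\mathbf 0$ and $\dd_{i;t_0}=-\ee_i$, hence $[\dd_{i;t_0}]_+=\mathbf 0=\ff_{i;t_0}$.

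For the inductive step, recall that a belt step $\mu_+$ (resp.\ $\mu_-$) is a product of mutations in the source (resp.\ sink) directions, which are pairwise non-adjacent; these single mutations commute, and none of them alters the local data ($\cc_{k;t}$, the entries $b_{jk;t}$, and the vectors $\dd_{j;t},\ff_{j;t}$ for $j$ adjacent to $k$) read by the recursion at another mutated vertex $k$. Consequently each newly produced vector $\dd_{k;t'}$ and $\ff_{k;t'}$ is computed by the single-mutation recursions directly from the belt seed at $t$, while the unmutated indices inherit the inductive hypothesis unchanged. It therefore suffices to prove the single-mutation identity $[\dd_{k;t'}]_+=\ff_{k;t'}$, assuming $\ff_{j;t}=[\dd_{j;t}]_+$ for all $j$. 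Writing out the right-hand side,
\begin{align}
\ff_{k;t'}=-\ff_{k;t}+\max\Bigl([\cc_{k;t}]_++\sum_{j=1}^{n}[b_{jk;t}]_+\ff_{j;t},\ [-\cc_{k;t}]_++\sum_{j=1}^{n}[-b_{jk;t}]_+\ff_{j;t}\Bigr),
\end{align}
and substituting $\ff_{j;t}=[\dd_{j;t}]_+$, one sees that the claim reduces to a single componentwise, piecewise-linear identity whose two sides are built from the same data as \eqref{dvectorrecursion}; they differ only by the summands $[\pm\cc_{k;t}]_+$ and by the outer $[\,\cdot\,]_+$ present on the $d$-side.

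The hard part will be the sign bookkeeping that makes these two sides agree, and this is precisely where the geometry of the bipartite belt enters: $[\,\cdot\,]_+$ does not commute with the subtraction $-\dd_{k;t}$ nested inside the $\max$, and the role of the $c$-vector terms is to repair exactly this failure. To carry it through I would establish, on the belt, (i) denominator positivity, i.e.\ that the only negative $d$-vectors occurring at $t$ are the initial $-\ee_i$, so that $[\dd_{j;t}]_+=\dd_{j;t}$ off the initial directions; and (ii) the sign-coherence of the $c$-vectors together with their explicit values governed by the Coxeter element, which pin down both which branch of the $\max$ is selected and the fact that $[\pm\cc_{k;t}]_+$ is supported exactly on the initial directions where $\dd_{j;t}$ and $[\dd_{j;t}]_+$ differ. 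Both ingredients are available on the bipartite belt through the root-system combinatorics of \cite{fziv}. As a cleaner route kept in reserve, I would instead use the separation-of-additions formula to write $x_{i;t}=x^{\gg_{i;t}}F^{B;t_0}_{i;t}(\hat\yy)$ in principal coefficients, read off $f_{ki;t}=\max_{a}a_k$ and $d_{ki;t}=-g_{ki;t}-\min_{a}\sum_{j}b_{kj}a_{j}$ as the two extreme linear functionals evaluated on the support of $F^{B;t_0}_{i;t}$, and then verify $\max_{a}a_k=[d_{ki;t}]_+$ directly from the explicit belt descriptions of the $g$-vectors and of the Newton polytopes of the $F$-polynomials; the crux there is again the explicit combinatorial input on the belt.
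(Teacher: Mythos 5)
First, a point of comparison: the paper does not prove this lemma at all --- it is imported verbatim as \cite[Corollary 10.10]{fziv}, with no argument given --- so your attempt is being measured against a citation, not an internal proof. Judged on its own terms, your skeleton is reasonable: the base case is correct, and the observation that the mutations inside one belt step $\mu_{\pm}$ are in pairwise non-adjacent directions (so they commute and do not disturb one another's local data) legitimately reduces everything to a single-mutation statement. But the proof stops exactly where the content of the lemma begins. The inductive step you need is the componentwise identity
\begin{align*}
\bigl[\dd_{k;t'}\bigr]_+ \;=\; -\bigl[\dd_{k;t}\bigr]_+ + \max\Bigl([\cc_{k;t}]_+ + \sum_{j}[b_{jk;t}]_+\bigl[\dd_{j;t}\bigr]_+,\ [-\cc_{k;t}]_+ + \sum_{j}[-b_{jk;t}]_+\bigl[\dd_{j;t}\bigr]_+\Bigr),
\end{align*}
with $\dd_{k;t'}$ given by \eqref{dvectorrecursion}, and you do not establish it; you assert that it will follow from (i) positivity of non-initial $d$-vectors on the belt and (ii) sign-coherence plus the Coxeter-element description of the belt $c$-vectors, deferring both to ``the root-system combinatorics of \cite{fziv}.'' Those inputs are themselves the substance of \cite[Section 10]{fziv} for an arbitrary (possibly infinite-type) bipartite cluster algebra, so the argument either silently imports the machinery whose corollary it is meant to reprove, or it has a hole at the crux.

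More seriously, (i) and (ii) cannot suffice to close the inductive step as you have formulated it. Sign-coherence of $c$-vectors holds for every cluster algebra, and nonnegativity of non-initial $d$-vectors holds in standard examples (e.g.\ the Markov quiver) where the conclusion $\ff_{i;t}=[\dd_{i;t}]_+$ is known to fail (\cite[Section 6.4]{fk}); since the identity holds at $t_0$ and fails somewhere, there is a first mutation at which all your hypotheses are satisfied and the conclusion breaks. The true obstruction is that the two $\max$'s above may select different branches componentwise, and the correction terms $[\pm\cc_{k;t}]_+$ need not repair the mismatch; your claim that these terms are ``supported exactly on the initial directions where $\dd_{j;t}$ and $[\dd_{j;t}]_+$ differ'' is not what sign-coherence gives you (it gives that one of the two terms vanishes and the other is $|\cc_{k;t}|$). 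Whatever belt-specific fact forces the branch selections to align is the actual content of \cite[Corollary 10.10]{fziv}, and it is missing from the proposal; the same criticism applies to your reserve route, whose ``explicit belt descriptions'' of $g$-vectors and Newton polytopes are again \cite[Theorem 10.2 ff.]{fziv}. To make this a proof you must either carry out the piecewise-linear case analysis in full with the correct belt input identified, or reduce honestly to the cited statements --- in which case citing Corollary 10.10 directly, as the paper does, is the cleaner course.
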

By Remark \ref{cartanexchange}, if $\mathcal{A}$ is of finite type, then $\mathcal{A}$ has a seed whose exchange matrix is bipartite. We prove the case that the initial matrix $B$ is bipartite.
\begin{lemma}[\cite{fziv}*{Proposition 11.1 (1)}]\label{finiteclusterspecial}
In a cluster algebra of finite type, for a bipartite seed $\Sigma_s$, every cluster variable belongs to a seed lying on the bipartite belt induced by $\Sigma_s$.
\end{lemma}

\begin{proposition}\label{bipartitef=d}
We fix a cluster algebra of finite type whose initial matrix $B$ is bipartite. For any $i\in\{1,\dots,n\}$ and $t\in\TT_n$, we have \eqref{fdeq}.
\end{proposition}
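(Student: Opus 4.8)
The plan is to reduce the identity \eqref{fdeq} for an arbitrary vertex $t \in \TT_n$ to the case of a vertex lying on the bipartite belt, where it is already guaranteed by Lemma \ref{bipartitebeltcase}. The mechanism of the reduction is that both sides of \eqref{fdeq} are invariants of the cluster variable $x_{i;t}$ rather than of its position $(i,t)$, so it suffices to locate each cluster variable somewhere on the belt and quote the belt case there.

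Concretely, I would work throughout in the principal coefficient cluster algebra $\Acal_\bullet(B)$, which is again of finite type since finite type is a property of the mutation class of $B$ alone. Here each cluster variable $x_{i;t}$ is a fixed Laurent polynomial in $x_1,\dots,x_n,y_1,\dots,y_n$: its $d$-vector is read off from the denominator of its expansion in the initial cluster $\xx_{t_0}$ via \eqref{eq:Laurent-normal-form}, and its $f$-vector is read off from the $F$-polynomial $F^{B;t_0}_{i;t}(\yy)$, the specialization at $x_1=\cdots=x_n=1$. Both are therefore determined by $x_{i;t}$ as an element of $\Fcal$; in particular, if a single cluster variable occupies two positions of $\TT_n$, the associated $d$-vectors agree and the associated $f$-vectors agree.

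Now I would apply Lemma \ref{finitecluster} with $\Sigma_s = \Sigma_{t_0}$, which is permissible because $B = B_{t_0}$ is bipartite by hypothesis. For the given $i$ and $t$, this furnishes a vertex $t'$ on the bipartite belt induced by $\Sigma_{t_0}$ and an index $j$ with $x_{i;t} = x_{j;t'}$. The invariance from the previous step gives $\dd_{i;t} = \dd_{j;t'}$ and $\ff_{i;t} = \ff_{j;t'}$, while Lemma \ref{bipartitebeltcase}, applicable since $t'$ lies on the belt, gives $\ff_{j;t'} = [\dd_{j;t'}]_+$. Stringing these together yields $\ff_{i;t} = \ff_{j;t'} = [\dd_{j;t'}]_+ = [\dd_{i;t}]_+$, which is exactly \eqref{fdeq}.

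Since the analytic and combinatorial substance is carried by the two lemmas, I expect no genuinely hard step. The one place to take care is the invariance of the $f$-vector: I must ensure that the coincidence of cluster variables provided by Lemma \ref{finitecluster} is read in the principal coefficient setting, so that equality as Laurent polynomials in $x_1,\dots,x_n,y_1,\dots,y_n$ forces equality of the $F$-polynomials and hence of the $f$-vectors. For the $d$-vectors no such care is needed, since they are manifestly independent of the coefficient system.
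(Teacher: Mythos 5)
Your proof is correct and follows essentially the same route as the paper: the paper's own (two-line) argument likewise applies Lemma \ref{finitecluster} with $\Sigma_s=\Sigma_{t_0}$ to place every cluster variable on the bipartite belt and then invokes Lemma \ref{bipartitebeltcase} there. Your elaboration of the implicit step—that both $\ff_{i;t}$ and $\dd_{i;t}$ are invariants of the cluster variable itself (read in the principal-coefficient pattern), so the identity transports from the belt position to the original position—is exactly what the paper leaves unsaid.
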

\begin{proof}
It follows from Lemmas \ref{bipartitebeltcase}
and \ref{finiteclusterspecial}.
\end{proof}
Let us generalize Proposition \ref{bipartitef=d} to the case that the initial matrix $B$ is non-bipartite. 
The next lemma is a generalization of Lemma \ref{finiteclusterspecial}. 
\begin{lemma}\label{finitecluster}
In a cluster algebra of finite type, for a seed $\Sigma_s$, every cluster variable belongs to seeds lying on the bipartite belt induced by $\Sigma_s$.
\end{lemma}
\begin{proof}
Let $\Sigma^{B}_t$ be a seed and $\Sigma^{B'}_s$ a bipartite seed. By regarding a change of the initial seed from $\Sigma^{B}_t$ to $\Sigma^{B'}_s$ as a change from the expression of cluster variables and coefficients by $\Sigma^B_t$ to that by $\Sigma^{B'}_s$, the general cases follows from the bipartite cases.
\end{proof}
We introduce a key lemma.
\begin{lemma}[{\cite{rs}*{Theorem 2.2}, \cite{fg}*{Theorem 3.10}}]\label{FDdual}
\begin{itemize}
\item[(1)]In a cluster algebra $\Acal(B)$ of finite type, for $t\in\TT_n$, we have
\begin{align}
D_t^{B;t_0}=(D_{t_0}^{B_t^T;t})^T.\label{ddual}
\end{align}
\item[(2)]In any cluster algebra $\Acal(B)$, for $t\in\TT_n$, we have
\begin{align}
F_t^{B;t_0}=(F_{t_0}^{B_t^T;t})^T.
\end{align}
\end{itemize}
\end{lemma}
\begin{remark}
In \cite{rs}*{Theorem 2.2}, the duality for $D$-matrices is given by
\begin{align}\label{ddual'}
D_t^{B;t_0}=(D_{t_0}^{-B_t^T;t})^T.
\end{align}
The equation \eqref{ddual} derives from \eqref{ddual'}. In fact, by symmetry of the recursion \eqref{dvectorrecursion} of $d$-vectors, we have $D_{t_0}^{-B_t^T;t}=D_{t_0}^{B_t^T;t}$.
\end{remark}
We are ready to prove the main theorem in this paper.
\begin{proof}[Proof of Theorem \ref{main}] 
We fix a bipartite seed $\Sigma$ in $\Acal(B)$. Note that $\mathcal{A}(B)$ is of finite type if and only if $\mathcal{A}(B^T)$ is also. Moreover, $B^T_t$ is bipartite if and only if $B_t$ is bipartite.
Therefore, $\mathcal{A}(B^T)$ is of finite type, and for any $t$ in a bipartite belt induced by $\Sigma$, $B_t^T$ is bipartite. Thus, we have 
\begin{align}\label{FDtranspose}
F_{t_0}^{B_t^T;t}=\left[D_{t_0}^{B_t^T;t}\right]_+,
\end{align}
by Proposition \ref{bipartitef=d} (the operation $[\ ]_+$ on matrices are performed component-wise).
Therefore, we have 
\begin{align}\label{FD}
F_{t}^{B;t_0}=\left[D_t^{B;t_0}\right]_+,
\end{align}
by Proposition \ref{FDdual}.
By Lemma \ref{finitecluster}, for a cluster variable $x_{j;s}$, there exist $i\in\{1,\dots,n\}$ and a vertex $t$ of the bipartite belt induced by a seed $\Sigma$ such that $x_{j;s}=x_{i;t}$. Thus, $\ff_{j;s}=\ff_{i;t}=\dd_{i;t}=\dd_{j;s}$ by \eqref{FD}, and we have \eqref{fdeq} for any initial vertex $t_0$.
\end{proof}
\section{Proof of Theorem \ref{sub} (1)}
In this section, we prove Theorem \ref{sub} (1). We fix any $\Acal(B)$ of finite type. Through this section, unless otherwise noted, we assume that seeds, cluster variables, clusters, $f$-vectors, $d$-vectors, $F$-matrices, and $D$-matrices are those of $\Acal(B)$. We start with proving the special case. We say that a vector $\bb$ is \emph{positive} (resp. \emph{negative}) if $\bb\neq \mathbf 0$ and all entries of $\bb$ is non-negative (resp. non-positive). Due to Theorem \ref{main}, we can use the properties of $d$-vectors to prove Theorem \ref{sub} (1). 
\begin{lemma}[{\cite{cp}*{Corollary 3.5}}]\label{dpositive}
A cluster variable $x_{i;t}$ is not in the initial cluster if and only if $\dd_{i;t}$ is positive.
\end{lemma}
By this lemma, we have the following corollary:
\begin{corollary}\label{0-initial}
An f-vector $\ff_{i;t}$ is the zero-vector if and only if $x_{i;t}$ is in the initial cluster.
\end{corollary}
\begin{proof}
The ``if" part is clear. We prove the ``only if" part. By Theorem \ref{main}, $\ff_{i;t}=\mathbf{0}$ implies that $\dd_{i;t}$ is negative or $\mathbf{0}$. By Lemma \ref{dpositive}, $x_{i;t}$ is in the initial cluster.
\end{proof}

The following propositions and corollary are essential for proving Theorem \ref{sub}:

\begin{proposition}[{\cite{fziv}*{Theorem 11.1 (2)}}]\label{bijspcase}
We fix a cluster algebra $\Acal(B)$ of finite type such that $B$ is bipartite and Cartan finite $X_n$ type. The $d$-vectors establish a bijection between cluster variables and the set of all almost positive roots $\Phi_{\geq -1}=\Phi_+\cup -\Delta$ of $X_n$ Dynkin type, where $\Phi_+$ is the set of all positive roots and $-\Delta$ is the set of negative simple roots.
\end{proposition}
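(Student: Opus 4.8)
The statement is the finite-type bijection of Fomin--Zelevinsky between cluster variables and almost positive roots, realized through denominator vectors, so the plan is to reconstruct it from the $d$-vector recursion \eqref{dvectorrecursion} together with the bipartite belt rather than to quote it outright. Since $\Acal(B)$ is of finite type and $B=B_{t_0}$ is bipartite, Lemma \ref{finitecluster} applied with $\Sigma_s=\Sigma_{t_0}$ shows that every cluster variable occurs in a seed lying on the bipartite belt induced by $\Sigma_{t_0}$. Hence it suffices to track the $d$-vectors along the belt, that is, under alternating applications of the source and sink mutations $\mu_+$ and $\mu_-$. On the root side I would write $\Phi_{\geq -1}=\Phi_+\cup(-\Delta)$ with simple roots $\alpha_1,\dots,\alpha_n$ and use the coordinate map sending $\beta\in\Phi_{\geq-1}$ to its vector of coefficients in the basis $\Delta$; under this map $-\alpha_i\mapsto -\ee_i$, which matches the initial $d$-vectors $\dd_{i;t_0}=-\ee_i$.

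The heart of the argument is to match the two sides combinatorially. First I would recall the piecewise-linear involutions $\tau_+,\tau_-$ on $\Phi_{\geq-1}$ attached to the bipartition $\varepsilon$ of \eqref{bipartitecondition}, together with the structural fact from finite root-system theory that, starting from a negative simple root and applying $\tau_+,\tau_-$ alternately, one traverses the whole $\langle\tau_+,\tau_-\rangle$-orbit, and that these orbits partition $\Phi_{\geq-1}$, finiteness being controlled by the Coxeter number. The crucial computation is then to show that, along the bipartite belt, the recursion \eqref{dvectorrecursion} reproduces exactly the action of $\tau_\pm$ on coordinate vectors. For a bipartite $B$ the signs of the $b_{jk}$ are dictated by $\varepsilon$ and the magnitudes $|b_{jk}|=-a_{jk}$ are the off-diagonal entries of the Cartan matrix $A(B)$; substituting this sign pattern into \eqref{dvectorrecursion} and checking that the $\max$ selects the correct linear branch in each sign region should identify the assignment $\dd\mapsto -\dd_k+\max(\cdots)$ with the two linear pieces of $\tau_\pm$.

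Granting this identification, the bijection follows. As $t$ ranges over the bipartite belt the vectors $\dd_{i;t}$ run bijectively through the coordinate vectors of $\Phi_{\geq-1}$, the initial seed contributing the $-\alpha_i$ and the later seeds contributing all of $\Phi_+$. Combining with Lemma \ref{finitecluster}, which guarantees that no cluster variable is missed, yields the desired bijection between cluster variables and $\Phi_{\geq-1}$ via $d$-vectors; the final \emph{in particular} assertion is then immediate, since distinct almost positive roots have distinct coordinate vectors and hence distinct $d$-vectors.

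The hard part will be the computation of the second paragraph: verifying that the tropical $\max$ in \eqref{dvectorrecursion} reproduces the piecewise-linear formula for $\tau_\pm$ \emph{uniformly} across Dynkin types. The non-simply-laced cases ($B_n,C_n,F_4,G_2$) are the delicate ones, since there the skew-symmetrizer and the asymmetry $|b_{jk}|\neq|b_{kj}|$ must be tracked carefully to confirm that the vectors produced are genuine roots and not merely nonnegative integer vectors; one also has to check that the $\max$ never mixes the two branches inconsistently as the sign of a running coordinate changes along the belt. This is exactly the point where an appeal to Proposition \ref{bipartitef=d} and the underlying finite-type root combinatorics, rather than a bare-hands recursion, keeps the verification tractable.
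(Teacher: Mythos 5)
The paper offers no proof of this proposition: it is imported wholesale as \cite[Theorem 11.1 (2)]{fziv}, which in turn rests on the denominator theorem of \cite{fzii}. So you are attempting to reprove a quoted black box, and your outline is essentially the original Fomin--Zelevinsky strategy: restrict attention to the bipartite belt, and match the $d$-vector recursion with the piecewise-linear involutions $\tau_\pm$ on $\Phi_{\geq -1}$, using the structural fact that every $\langle\tau_+,\tau_-\rangle$-orbit meets $-\Delta$. As a roadmap this is the right route to the cited theorem.

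As a proof, however, it has a genuine gap that you half-acknowledge: the entire content of the proposition is concentrated in the step you defer, namely that along the bipartite belt the tropical recursion \eqref{dvectorrecursion} reproduces the action of $\tau_\pm$ on root coordinates. This is precisely \cite[Theorem 1.9]{fzii}; it is not a routine substitution, since one must run an induction showing that the intermediate vectors remain in $\Phi_{\geq -1}$ so that the $\max$ always selects the intended linear branch (your own caveat about the non-simply-laced types is exactly where this bites). Until that verification is carried out, nothing is proved. Two further points. First, your conclusion that the $\dd_{i;t}$ ``run bijectively'' through $\Phi_{\geq -1}$ as $t$ ranges over the belt only yields surjectivity; injectivity of the map from cluster variables to $d$-vectors additionally requires that a cluster variable recurs on the belt exactly when its $d$-vector does, i.e.\ that the periodicity of the belt (governed by the Coxeter number) matches the $\langle\tau_+,\tau_-\rangle$-orbit structure --- this is asserted, not argued. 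Second, the closing appeal to Proposition \ref{bipartitef=d} cannot help here: knowing $\ff=[\dd]_+$ says nothing about whether $\dd$ is a root. A self-contained treatment must either carry out the $\tau_\pm$ identification in full or, as the paper does, simply cite it.
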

Let $\mathcal{D}(B)$ be the set of all $d$-vectors in $\Acal(B)$.
\begin{proposition}[{\cite{ns}*{Theorem 1.3.3}}]\label{cardinary}
We fix a cluster algebra $\Acal(B)$ of finite type. Then the cardinality $|\mathcal{D}(B)|$ depends only on the Dynkin type $X_n$ of $\Acal(B)$.
\end{proposition}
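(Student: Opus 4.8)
The plan is to show that $|\mathcal{D}(B)|$ is unchanged as we vary the initial seed among the seeds of a fixed cluster pattern of Dynkin type $Z$, and then to pin down its common value from the bipartite case. First I would fix the value in the distinguished situation: by Remark \ref{cartanexchange}, $B$ is mutation equivalent to a bipartite matrix $B'$ with $A(B')$ the Cartan matrix of type $Z$, and for that initial seed Proposition \ref{bijspcase} identifies the $d$-vectors with the almost positive roots $\Phi_{\geq -1}$. Hence $|\mathcal{D}(B')|=|\Phi_{\geq -1}|=|\Phi_+|+n$, a quantity depending only on $Z$. It therefore suffices to prove that $|\mathcal{D}(B)|=|\mathcal{D}(B')|$ for every $B$ of type $Z$.

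Since any two exchange matrices of the same Dynkin type are mutation equivalent, I would connect $B'$ to $B$ by a chain of single mutations and argue that each step preserves the cardinality. Changing the initial matrix from $B$ to $\mu_k(B)$ amounts to moving the base point $t_0$ to an adjacent vertex inside the same cluster pattern (up to the strong isomorphism of Remark \ref{cartanexchange}), so the underlying set of cluster variables is unchanged and only their denominator vectors are recomputed. Writing a cluster variable as $x=N/(x_1^{d_1}\cdots x_n^{d_n})$ and substituting $x_k=P/x_k'$, where $x_kx_k'=P$ is the exchange relation at $t_0$, I would extract the induced transformation $\dd\mapsto\dd'$ on exponent vectors. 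The goal is then to verify that this transformation, restricted to $\mathcal{D}(B)$, is a well-defined injection, which would force $|\mathcal{D}(B)|=|\mathcal{D}(\mu_k(B))|$ and, by iteration, equality with the bipartite value.

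The main obstacle is precisely the behaviour of $d$-vectors under this change of base point. Unlike the $c$- and $g$-vectors, whose dependence on the initial seed is governed by clean piecewise-linear tropical rules, the substitution $x_k=P/x_k'$ reintroduces the remaining initial variables through the factor $P^{-d_k}$, and the requirement in \eqref{eq:Laurent-normal-form} that $N$ be divisible by no initial variable demands a careful cancellation analysis; the resulting map $\dd\mapsto\dd'$ is genuinely piecewise-linear with no single closed form, which is what makes injectivity on $\mathcal{D}(B)$ delicate. The robust route I would ultimately take is to bypass the step-by-step comparison in favour of a uniform combinatorial description of the entire set of $d$-vectors valid for every initial seed, expressing each $d$-vector through the compatibility degrees of the almost positive roots of type $Z$; from such a description the identity $|\mathcal{D}(B)|=|\Phi_{\geq -1}|$ can be read off directly and uniformly in $B$. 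Establishing that description, rather than the final counting, is where the real work lies. I would also note that once $|\mathcal{D}(B)|=|\Phi_{\geq -1}|$ is known, this coincides with the number of cluster variables, so a pigeonhole argument recovers the injectivity of the $d$-vector assignment needed in Section 3.
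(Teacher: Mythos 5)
The paper gives no internal proof of this proposition: it is imported verbatim as \cite[Theorem 1.3.3]{ns}, so there is nothing in the text to match your argument against. Judged as a self-contained proof, your proposal has a genuine gap. You correctly pin down the value in the bipartite case via Proposition \ref{bijspcase} ($|\mathcal{D}(B')|=|\Phi_{\geq -1}|$), and you correctly diagnose why the step-by-step strategy is hard: unlike $c$- and $g$-vectors, $d$-vectors have no clean piecewise-linear transformation rule under a change of initial vertex, and the injectivity of the induced map on $\mathcal{D}(B)$ is exactly what is in doubt. But your proposed escape --- ``a uniform combinatorial description of the entire set of $d$-vectors valid for every initial seed, expressing each $d$-vector through the compatibility degrees of the almost positive roots'' --- is not a step toward the theorem; it \emph{is} the theorem of Nakanishi--Stella being cited. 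You have reduced the statement to itself and explicitly acknowledged that ``establishing that description \dots is where the real work lies,'' so none of the actual content is supplied.

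Two further cautions. First, you cannot shortcut via $|\mathcal{D}(B)|\leq|\mathcal{X}(B)|$ together with injectivity of the $d$-vector assignment, because in this paper the injectivity (Corollary \ref{duniquely}) is \emph{deduced from} Proposition \ref{cardinary}; invoking it here would be circular, and your closing remark about recovering injectivity by pigeonhole shows you are aware the two statements are entangled. Second, in the mutation-by-mutation approach, even granting that the set of cluster variables is preserved under moving the base point, the map $\dd\mapsto\dd'$ you would extract from the substitution $x_k=P/x_k'$ need not obviously be well defined on exponent vectors independently of the cluster variable carrying them, precisely because the required non-divisibility of the numerator in \eqref{eq:Laurent-normal-form} forces a cancellation analysis that depends on the variable, not just on $\dd$. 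Either obstacle alone is enough that the argument, as written, does not close.
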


\begin{corollary}\label{duniquely}
If $\dd_{i;t}=\dd_{j;s}$ holds, then we have $x_{i;t}=x_{j;s}$.
\end{corollary}
\begin{proof}
Let $B'$ be a bipartite matrix of finite Cartan $X_n$ type which is mutation equivalent to $B$. Then by Proposition \ref{bijspcase} and Proposition \ref{cardinary}, we have 
\begin{align}
|\mathcal{D}(B)| = |\mathcal{D}(B')| = |\Phi_{\geq -1}|.
\end{align}
Let $\Xcal(B)$ be the set of all cluster variables of $\Acal(B$). By Remark \ref{cartanexchange} and Proposition \ref{bijspcase}, we have
\begin{align}
|\mathcal{X}(B)| = |\mathcal{X}(B')| = |\Phi_{\geq -1}|.
\end{align}
Therefore, we have
\begin{align}\label{d=x}
|\mathcal{D}(B)| = |\mathcal{X}(B)|.
\end{align}
If there exist $d$-vectors $d_{i;t}$ and $d_{j;s}$ such that $d_{i;t}=d_{j;s}$ and $x_{i;t}\neq x_{j;s}$, then we have $|\mathcal{D}(B)| < |\mathcal{X}(B)|$. This conflicts with \eqref{d=x}. 
\end{proof}
By Corollary \ref{0-initial} and Corollary \ref{duniquely}, we have the following proposition:
\begin{proposition}\label{fneq0case}
If $\ff_{i;t}=\ff_{j;s}\neq\mathbf 0$, then we have $x_{i;t}=x_{j;s}$.
\end{proposition}

\begin{proof}
Let $\ff$ be an $f$-vector which is not equal to $\mathbf{0}$. We assume that $\ff=\ff_{i;t}=\ff_{j;s}$. Since all entries of $\ff$ are non-negative, and the $f$-vector is not equal to $\mathbf{0}$, we have $\ff=\dd_{i;t}=\dd_{j;s}$ by Theorem \ref{main} and Lemma \ref{dpositive}. By Proposition \ref{duniquely}, we have $x_{i;t}=x_{j;s}$.
\end{proof}
While $d$-vectors can distinguish the initial clusters, $f$-vectors cannot. Thus, we cannot detect the initial cluster variables contained in a cluster by their $f$-vectors directly. However, using the property of $d$-vectors, we can detect them.

\begin{proposition}\label{ddetect}
For a $D$-matrix $D_t^{B;t_0}$, negative column vectors of $D_t^{B;t_0}$ are uniquely determined by positive column vectors of $D_t^{B;t_0}$.
\end{proposition}
\begin{proof}
By \eqref{ddual}, the transposition of a $D$-matrix in a cluster algebra of finite type is another $D$-matrix in a cluster algebra of finite type because $\Acal(B)$ is of finite type if and only if $\Acal(B_t^T)$ is of finite type. Since negative $d$-vectors have the form of $-\ee_i$, if the $(i,j)$ entry of $D_t^{B;t_0}$ is $-1$, then entries of the $i$th row and the $j$th column of $D_t^{B;t_0}$ are all 0 except for the $(i,j)$-entry. Since $D_t^{B;t_0}$ do not have the zero column vector by Lemma \ref{dpositive}, if a $D$-matrix has just $m$ positive columns, then we have just $n-m$ indices $i_1,\dots,i_{n-m}$ such that the $i_k (k\in\{1,\dots,n-m\})$th entry of all positive $d$-vectors is $0$, and $D_t^{B;t_0}$ has column vectors $-\ee_{i_k} (k\in\{1,\dots,n-m\})$. This finishes the proof.
\end{proof}
We are ready to prove Theorem \ref{sub} (1).
\begin{proof}[Proof of Theorem \ref{sub} (1)]
If $\ff_{i;t}=\ff_{j;s}\neq\mathbf{0}$, then we have $x_{i;t}=x_{j;s}$ by Proposition \ref{fneq0case}. We assume that there are $m$ zero-vectors in $(\ff_{1;t},\dots,\ff_{n;t})$ (or $(\ff_{1;s},\dots,\ff_{n;s})$). By regarding positive $f$-vectors as $d$-vectors by Theorem \ref{main}, we detect the rest of $d$-vectors in $\xx_t$ and $\xx_s$ by Proposition \ref{ddetect}. Since positive $d$-vectors in $\xx_t$ corresponds with that of $\xx_s$, we have $\xx_t=\xx_s$ by Corollary \ref{duniquely}.
\end{proof}

\section{Proof of Theorem \ref{sub} (2)}
We prove Theorem \ref{sub} (2). The strategy of this proof is almost the same as Theorem \ref{sub} (1), but we sometimes use the special properties of cluster algebras of rank $2$. 

For a cluster algebra of rank $2$, we may assume that the initial matrix $B$ has the following form without loss of generality:
\begin{align}\label{rank2mat}
B=\begin{bmatrix}
0&b\\
-c&0
\end{bmatrix},\quad b,c\in \ZZ_{\geq0}, \quad bc\geq4,
\end{align}
 because when $bc\leq 3$, this cluster algebra is of finite type. We name vertices of $\TT_2$ by the rule of \eqref{A2tree} and consider a cluster pattern $t_n\mapsto (\xx_{t_n},\yy_{t_n}, B_{t_n})$. We abbreviate $\xx_{t_n}$ (resp., $\yy_{t_n},\ B_{t_n},\Sigma_{t_n}$) to $\xx_n$ (resp., $\yy_{n},\ B_{n},\ \Sigma_n$). We also abbreviate $d$-vectors, $D$-matrices, $f$-vectors, and $F$-matrices in the same way. 
 
We consider a description of $D$-matrices in the case $n\geq0$. First, we have
 \begin{align}
 D_{0}^{B;t_0}=\begin{bmatrix}-1&0\\0&-1\end{bmatrix},\quad D_{1}^{B;t_0}=\begin{bmatrix}1&0\\0&-1\end{bmatrix} 
\end{align}
by direct calculation. By \cite{llz}*{(1.13)}, if $n>0$ is even, then we can denote
\begin{align}
D_n^{B;t_0}=\begin{bmatrix}
S_{\frac{n-2}{2}}(u)+S_{\frac{n-4}{2}}(u)& bS_{\frac{n-2}{2}}(u)\\
cS_{\frac{n-4}{2}}(u) &S_{\frac{n-2}{2}}(u)+S_{\frac{n-4}{2}}(u)
\end{bmatrix},
\end{align}
and if $n>1$ is odd, then we can denote
\begin{align}
D_n^{B;t_0}=\begin{bmatrix}
S_{\frac{n-1}{2}}(u)+S_{\frac{n-3}{2}}(u)& bS_{\frac{n-3}{2}}(u)\\
cS_{\frac{n-3}{2}}(u) &S_{\frac{n-3}{2}}(u)+S_{\frac{n-5}{2}}(u)
\end{bmatrix},
\end{align}
where $u=bc-2$ and $S_p(u)$ is a (normalized) Chebyshev polynomial of the second kind, that is, 
\begin{align}
S_{-1}(u)=0,\quad S_0(u)=1,\quad S_p(u)=uS_{p-1}(t)-S_{p-2}(u)\ (p\in\mathbb{N}).
\end{align}

When $n<0$, $D_n^{B;t_0}$ is the following matrix:
\begin{align}\label{negativen}
D_n^{B;t_0}=\begin{bmatrix}
d_{22;-n}^{-B^T}& d_{21;-n}^{-B^T}\\
d_{12;-n}^{-B^T}& d_{11;-n}^{-B^T}
\end{bmatrix},
\end{align}
where $d_{ij;-n}^{-B^T}$ is the $(i,j)$ entry of $D_{-n}^{-B^T;t_0}$. 

We fix any $\Acal(B)$ of rank 2. Through the rest of this section, unless otherwise noted, we assume that seeds, cluster variables, clusters, $f$-vectors, $d$-vectors, $F$-matrices, and $D$-matrices are those of $\Acal(B)$.
Using the above descriptions, we prove some properties for $d$-vectors.
\begin{lemma}\label{dpositive2}
The initial cluster variables belong to $\Sigma_{0}$ or $\Sigma_{\pm1}$. Furthermore, $x_{i;t}$ is not in the initial cluster if and only if $\dd_{i;t}$ is positive.
\end{lemma}
\begin{proof}
We prove it in the case $n>0$. It suffices to show that for any $u\geq 2$ and $p\geq -1$, $S_p(u)\geq 0$ holds and $S_p(u)=0$ if and only if $p=-1$. The general term of $S_p(u)$ is 
\begin{align}
S_p(u)=\begin{cases}
p+1 &\text{if $u=2$};\\
\dfrac{1}{\sqrt{u^2-4}}\left(\left(\dfrac{u+\sqrt{u^2-4}}{2}\right)^{p+1}-\left(\dfrac{u-\sqrt{u^2-4}}{2}\right)^{p+1}\right) &\text{if $u\neq 2$}.
\end{cases}
\end{align}
By direct calculation, we have $S_p(u)\geq 0$. Also, $S_p(u)=0$ holds if and only if $p=-1$ holds. 
In the case $n<0$, we can use the result of the case $n>0$ by \eqref{negativen}.
\end{proof}

The following corollary is analogous to Corollary \ref{0-initial}:

\begin{corollary}\label{0-initial2}
An $f$-vector $\ff_{i;t}$ is the zero-vector if and only if $x_{i;t}$ is in the initial cluster.
\end{corollary}

\begin{proof}
We can prove it in the same way as Corollary \ref{0-initial}: we use Lemma \ref{dpositive2} instead of Lemma \ref{dpositive}.
\end{proof}

The following lemma is analogous to Corollary \ref{duniquely}:

\begin{lemma}\label{duniquely2}
If $\dd_{i;t}=\dd_{j;s}$, then we have $x_{i;t}=x_{j;s}$.
\end{lemma}
\begin{proof}
When $\PP=\{1\}$, by using \cite{llz}*{(1.15)} (cf. Section \ref{restoreformula}), we have the expressions of cluster variables induced by $d$-vectors. For the general case, the use of \cite{cl2}*{Proposition 3 (i)} leads to the case where $\PP=\{1\}$.
\end{proof}

The following proposition is analogous to Corollary \ref{fneq0case}:

\begin{proposition}\label{fneq0case2}
If $\ff_{i;t}=\ff_{j;s}\neq \mathbf 0$, then we have $x_{i;t}=x_{j;s}$.
\end{proposition}

\begin{proof}
We can prove it in the same way as Corollary \ref{fneq0case}: we use Corollary \ref{0-initial2} and Lemma \ref{duniquely2} instead of Corollary \ref{0-initial} and Corollary \ref{duniquely} respectively.
\end{proof}

The following proposition is analogous to Proposition \ref{ddetect}. Unlike Proposition \ref{ddetect}, we do not need to use the duality for $D$-matrices.

\begin{proposition}\label{ddetect2}
For a $D$-matrix $D_n^{B;t_0}$, negative column vectors of $D_n^{B;t_0}$ are uniquely determined by positive column vectors of $D_n^{B;t_0}$.
\end{proposition}
\begin{proof}
When both $d$-vectors in $D_n^{B;t_0}$ are negative vectors, it is clear. Therefore, we can assume that only one $d$-vector is negative. By Lemma \ref{dpositive2}, the initial cluster variables only appear in $\Sigma_{0}$ or $\Sigma_{\pm1}$. Therefore, if $\dd_{1;0}=\dd_{1;-1}=-\ee_1$ is contained in two $d$-vectors associated with a cluster, then the other is always $\dd_{2:-1}$. Similarly, if $\dd_{2;0}=\dd_{2;1}=-\ee_2$ is contained in two $d$-vectors, then the other is always $\dd_{1:1}$. By this observation, it suffices to show $\dd_{2;-1}\neq \dd_{1;1}$. We have $\dd_{2;-1}=\begin{bmatrix}
0\\
1
\end{bmatrix}$, and $\dd_{1;1}=\begin{bmatrix}
1\\
0
\end{bmatrix}$ by direct calculation. This finishes the proof.
\end{proof}
We are ready to prove Theorem \ref{sub} (2).
\begin{proof}[Proof of Theorem \ref{sub} (2)]
We can prove it in the same way as Theorem \ref{sub} (1): we use Lemma \ref{duniquely2}, Proposition \ref{fneq0case2}, and Proposition \ref{ddetect2} instead of Corollary \ref{duniquely}, Proposition \ref{fneq0case}, and Proposition \ref{ddetect} respectively. 
\end{proof}
\section{Restoration formula of cluster algebras of rank 2}\label{restoreformula}
We proved that cluster variables are uniquely determined by their $f$-vectors for cluster algebras of rank 2 in the previous section. In this section, we describe these cluster variables explicitly in the case that coefficients are the principal ones. By this description, we establish a way to restore $F$-polynomials from $f$-vectors. Throughout this section, we assume that $\Acal(B)$ has the following initial matrix:

\begin{align}\label{rank2mat2}
B=\begin{bmatrix}
0&b\\
-c&0
\end{bmatrix},\quad b,c\in \ZZ_{\geq0}.
\end{align}
We do not assume $bc\geq 4$, thus cluster algebras of finite type and rank 2 ($A_2,B_2,G_2$ Dynkin types) are contained.
Unless otherwise noted, we assume that seeds, cluster variables, clusters, $f$-vectors, $d$-vectors, $F$-matrices, and $D$-matrices are those of $\Acal(B)$.

A previous work \cite{llz} has given a cluster expansions formula in the case that $\PP=\{1\}$. This formula restores the expressions of cluster variables by the initial ones from their $d$-vectors. We start with an explanation of this formula.

We define Dyck Paths and some notations along \cite{llz}*{Section 1}. Let $(a_1, a_2)$ be a pair of non-negative integers. A \emph{Dyck path} of type $a_1\times a_2$ is a lattice path from $(0,0)$ to $(a_1, a_2)$ and it does not go above the diagonal combining $(0,0)$ with $(a_1, a_2)$. For the Dyck paths of $a_1\times a_2$ type, there is the \emph{maximal} one $\mathcal{D}^{a_1\times a_2}$. 
It is defined by the following property: for any lattice point $A$ on $\mathcal{D}$, there is no lattice points between $A$ and the crosspoint of a vertical line including $A$ and the diagonal combining $(0,0)$ with $(a_1, a_2)$. 

For $\mathcal{D}=\mathcal{D}^{a_1\times a_2}$, let $\mathcal{D}_1=\{u_1,\dots,u_{a_1}\}$ be the set of horizontal edges of $\mathcal{D}$ indexed from left to right, and $\mathcal{D}_2=\{v_1,\dots, v_{a_2}\}$ be the set of vertical edges of $\mathcal{D}$ indexed from bottom to top. 

For any $A$ and $B$ on $\mathcal{D}$, let $AB$ be the subpath of $\mathcal{D}$ starting from $A$ and going in the upper right direction along $\mathcal{D}$ until it reaches $B$. If we reach $(a_1,a_2)$ before reaching B, we restart from $(0,0)$. If $A$ and $B$ are the same lattice point, then $AA$ is the subpath which starts from $A$, then passes $(a_1,a_2)$ and ends at $A$. Here $(0,0)$ and $(a_1,a_2)$ are regarded as the same point, thus if $A=(a_1,a_2)$, then $AA$ corresponds with the maximal Dyck path. We denote by $(AB)_1$ the set of horizontal edges in $AB$, and by $(AB)_2$ the set of vertical edges in $AB$. 
Let $AB^\circ$ be the set of lattice points on the subpath $AB$ except for the endpoints $A$ and $B$.

\begin{example}
We fix $(a_1,a_2)=(5,3)$, and
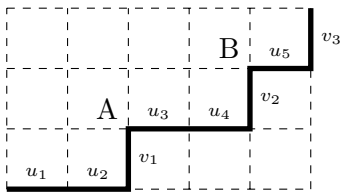
\begin{figure}[ht]
\begin{center}
$\begin{tikzpicture}
\draw[dashed, step=1,color=black] (0,0) grid (5,3);
\draw[line width=2,color=black] (0,0)--(2,0)--(2,1)--(3,1)--(4,1)--(4,2)--(5,2)--(5,3);
\draw (0.5,0) node[anchor=south] {\tiny$u_1$};
\draw (1.5,0) node[anchor=south] {\tiny$u_2$};
\draw (2.5,1) node[anchor=south] {\tiny$u_3$};
\draw (3.5,1) node[anchor=south] {\tiny$u_4$};
\draw (4.5,2) node[anchor=south] {\tiny$u_5$};
\draw (5,2.5) node[anchor=west] {\tiny$v_3$};
\draw (4,1.5) node[anchor=west] {\tiny$v_2$};
\draw (2,.5) node[anchor=west] {\tiny$v_1$};
\draw (2,1) node[anchor=south east] {A};
\draw (4,2) node[anchor=south east] {B};
\end{tikzpicture}$
\caption{A maximal Dyck path ($(a_1,a_2)=(5,3)$).}
\label{fig:Dyck-path}
\end{center}
\end{figure}
let $A=(2,1)$, $B=(4,2)$. Then 
\begin{align*}
(AB)_1=\{u_3,u_4\}, \,\, (AB)_2=\{v_2\}, 
(BA)_1=\{u_5,u_1,u_2\}, \, \, (BA)_2=\{v_3,v_1\}, 
\end{align*}

and the subpath $AA$ has length 8 (see Figure \ref{fig:Dyck-path}). 

\end{example}
Next, we define the compatibility in $\mathcal{D}$: 
\begin{definition}[{\cite{llz}*{Definition 1.10}}]
\label{df:compatible}
For $S_1\subseteq \mathcal{D}_1$, $S_2\subseteq \mathcal{D}_2$, we say that the pair $(S_1,S_2)$ is \emph{compatible} if for every $u\in S_1$ and $v\in S_2$, denoting by $E$ the left endpoint of $u$ and $F$ the upper endpoint of $v$, there exists a lattice point $A\in EF^\circ$ such that 
\begin{equation}
\label{0407df:comp}
|(AF)_1|=b|(AF)_2\cap S_2| \text{ or }|(EA)_2|=c|(EA)_1\cap S_1|.\end{equation}
\end{definition}

We are ready to describe a cluster expansion formula for cluster algebras of rank 2. 

\begin{theorem}[{\cite{llz}*{Theorem 1.11}}]
\label{th:greedy-combinatorial}
 For every $d$-vector $\dd=\begin{bmatrix}d_1 \\ d_2\end{bmatrix}$, the cluster variable $x_\dd$ corresponding to $\dd$ is given by the following equation:
\begin{align}
\label{eq:greedy-Dyck-expression}
x_{\dd} = x_1^{-d_1}x_2^{-d_2}\sum_{(S_1,S_2)}x_1^{b|S_2|}x_2^{c|S_1|},
\end{align}
where the sum is over all compatible pairs $(S_1,S_2)$ in $\mathcal{D}^{[d_1]_+\times [d_2]_+}$.
\end{theorem}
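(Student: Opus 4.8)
The formula to be proved is coefficient-free, so I would work throughout with $\PP=\{1\}$ (the principal-coefficient refinement being the separate extension taken up afterwards). Writing $a_1=[d_1]_+$ and $a_2=[d_2]_+$ and setting
\[
X_{a_1,a_2} := x_1^{-a_1} x_2^{-a_2} \sum_{(S_1,S_2)} x_1^{b|S_2|} x_2^{c|S_1|},
\]
with the sum over compatible pairs in $\mathcal{D}^{a_1\times a_2}$, the claim becomes $x_{\dd}=X_{a_1,a_2}$. The natural route is through the \emph{greedy elements} of \cite{llz}: for $(a_1,a_2)\in\ZZ^2$ the greedy element $x[a_1,a_2]=x_1^{-a_1}x_2^{-a_2}\sum_{p,q\geq 0}c(p,q)\,x_1^{bp}x_2^{cq}$ is the unique pointed Laurent polynomial with $c(0,0)=1$ whose coefficients satisfy the greedy recursion. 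The proof then splits into an algebraic identification of cluster variables with greedy elements and a combinatorial identification of the greedy coefficients with counts of compatible pairs.

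For the algebraic step I would show that the family $\{x[a_1,a_2]\}$ is stable under the rank-$2$ exchange relations. The cluster variables $x_m$ ($m\in\ZZ$) along $\TT_2$ satisfy $x_{m-1}x_{m+1}=x_m^{b_m}+1$ with $b_m\in\{b,c\}$ alternating by parity, and one checks that greedy elements obey the same relations, so that the greedy element attached to each denominator vector $\dd_m$ reproduces $x_m$. Because the $D$-matrix entries are given explicitly (as in the proof of Lemma \ref{dpositive2}), the vectors $\dd_m$ are pairwise distinct and, for non-initial variables, non-negative; hence $\dd\mapsto x_{\dd}$ is well defined and $x_{\dd}=x[a_1,a_2]$. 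For the two initial variables the Dyck path $\mathcal{D}^{0\times 0}$ is trivial and both sides reduce to $x_1$ or $x_2$.

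The main obstacle is the second, purely combinatorial, step: that the number of compatible pairs $(S_1,S_2)$ in $\mathcal{D}^{a_1\times a_2}$ with $|S_1|=p$, $|S_2|=q$ equals $c(p,q)$. The plan is to show these counts satisfy the greedy recursion
\[
c(p,q)=\max\!\left(\sum_{k=1}^{p}(-1)^{k-1}\binom{a_2-cq+k-1}{k}c(p-k,q),\ \sum_{l=1}^{q}(-1)^{l-1}\binom{a_1-bp+l-1}{l}c(p,q-l)\right),
\]
via a deletion argument with inclusion--exclusion: fixing the vertical edges of $S_2$ and peeling horizontal edges off a distinguished segment of the Dyck path one at a time produces the sum over $k$, the alternating binomial factor $\binom{a_2-cq+k-1}{k}$ correcting for the compatibility constraints double-counted when several edges are removed together, and the sum over $l$ arises symmetrically. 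The delicate points, where I expect the real work to lie, are (i) showing that the compatibility condition of Definition \ref{df:compatible} is local enough that deleting an edge again yields a compatible pair on a smaller maximal Dyck path, and (ii) justifying the maximum, i.e. explaining why the two expressions agree when the binomial arguments are non-negative and why otherwise the larger one records the true count. Once this recursion is verified, $X_{a_1,a_2}=x[a_1,a_2]$, and combining with the algebraic step yields $x_{\dd}=X_{a_1,a_2}$.
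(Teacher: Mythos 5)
The first thing to note is that the paper does not prove Theorem \ref{th:greedy-combinatorial} at all: it is imported verbatim from \cite[Theorem 1.11]{llz} (together with the fact, also from \cite{llz}, that every cluster variable is the greedy element pointed at its $d$-vector; see the Remark following the theorem), and the only original content in this section is the extension to principal coefficients in Theorem \ref{principalformula}. So there is no in-paper argument to compare yours against; what you have written is an outline of how one would reprove the Lee--Li--Zelevinsky result itself.

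As such an outline it follows the right general strategy --- first identify cluster variables with greedy elements, then identify greedy coefficients with counts of compatible pairs --- but both halves conceal the actual difficulty, and the second half is left explicitly unproved. For the algebraic step, the assertion that ``one checks that greedy elements obey the same relations'' $x_{m-1}x_{m+1}=x_m^{b_m}+1$ is not a routine verification: proving that the family of greedy elements is stable under mutation (equivalently, that the greedy elements attached to consecutive denominator vectors satisfy the exchange relations) is one of the main theorems of \cite{llz}, and its proof already requires detailed control of the coefficients $c(p,q)$; as stated, this step risks circularity, since the most direct way to see it is via the very combinatorial formula being proved. For the combinatorial step, you correctly state the greedy max-recursion and propose a deletion/inclusion--exclusion argument, but you yourself flag points (i) and (ii) as ``where the real work lies''; that work --- showing that the compatible-pair counts of Definition \ref{df:compatible} satisfy the recursion, including the analysis of when the two branches of the maximum agree --- is the technical heart of \cite{llz} and is not reconstructed here. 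So as a blind proof the proposal is a plausible roadmap with the essential content missing; as a comparison with the paper, the honest answer is that the paper treats this statement as known and simply cites it.
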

\begin{remark}
In \cite{llz}*{Theorem 1.11}, \eqref{eq:greedy-Dyck-expression} is defined for any $(a_1,a_2)\in\ZZ^2$ and is called a \emph{greedy element}.
\end{remark}
We generalize this formula to the principal coefficients version in a way which is analogous to \cite{ls}. First, we define the $g$-vectors according to \cite{fziv}. Cluster variables with the principal coefficients are homogeneous by the following $\ZZ^n$-grading: for any $i\in\{1,\dots,n\}$,
\begin{align}\label{grading}
\deg(x_i) = \ee_i, \quad \deg(y_i)=-\mathbf{b}_i,
\end{align}
where $\mathbf{b}_i$ is the $i$th column vector of $B$ (see \cite{fziv}*{Proposition 6.1}). We define the $g$-vector $\gg_{i;t}=\begin{bmatrix}
g_{1i;t}\\ \vdots\\ g_{ni;t}
\end{bmatrix}$ as the degree vector of a cluster variable $x_{i;t}$. Like $f$-vectors, they are independent of the choice of $\PP$ by defining them in the following way: for any $i\in\{1,\dots,n\}$,
\begin{align}
\gg_{i;t_0}=\ee_i,
\end{align}
and for any \begin{xy}(0,1)*+{t}="A",(10,1)*+{t'}="B",\ar@{-}^k"A";"B" \end{xy}, 
\begin{align}
g_{ij;t'}=
\begin{cases}g_{ij;t} &\text{if $j\neq k$}; \\ 
-g_{ik;t}+\mathop{\sum}\limits_{\ell=1}^n g_{i \ell;t}[b_{\ell k;t}]_+-\mathop{\sum}\limits_{\ell=1}^n b_{i \ell}[c_{\ell k;t}]_+&\text{if $j=k$},
\end{cases}\label{eq3}
\end{align}
where $c_{\ell k;t}$ is the $\ell$th entry of $\cc_{k;t}$ (cf. Remark \ref{independentf}).

When a cluster algebra is of rank 2, $g$-vectors are obtained by $d$-vectors:
\begin{theorem}\label{gdrelation}
For a $g$-vector $\gg=\begin{bmatrix}
g_1\\g_2
\end{bmatrix}$ and a $d$-vector $\dd=\begin{bmatrix}
d_1\\d_2
\end{bmatrix}$ of a cluster variable, we have the following equation: 
\begin{align}
\begin{bmatrix}
g_1\\g_2
\end{bmatrix}=\begin{bmatrix}
-d_1\\cd_1-d_2
\end{bmatrix}.
\end{align}
\end{theorem}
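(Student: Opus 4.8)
The plan is to read off the $g$-vector as the degree of the cluster variable with principal coefficients under the grading \eqref{grading}, and then to locate that degree among the exponents produced by the coefficient-free Dyck-path expansion of Theorem \ref{th:greedy-combinatorial}. Spelling out \eqref{grading} for $B$ as in \eqref{rank2mat2}, the columns of $B$ are $\mathbf{b}_1=\begin{bmatrix}0\\-c\end{bmatrix}$ and $\mathbf{b}_2=\begin{bmatrix}b\\0\end{bmatrix}$, so $\deg(x_1)=\ee_1$, $\deg(x_2)=\ee_2$, $\deg(y_1)=\begin{bmatrix}0\\c\end{bmatrix}$ and $\deg(y_2)=\begin{bmatrix}-b\\0\end{bmatrix}$. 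Let $X$ be the cluster variable of $\Acal_\bullet(B)$ whose $d$-vector is $\dd$; by definition its $g$-vector $\gg$ is the degree of $X$. Writing an arbitrary monomial of $X$ as $y_1^{p_1}y_2^{p_2}x_1^{a_1}x_2^{a_2}$ with $p_1,p_2\geq 0$, homogeneity forces
\begin{align*}
g_1=a_1-bp_2,\qquad g_2=a_2+cp_1,
\end{align*}
so that $a_1=g_1+bp_2\geq g_1$ and $a_2=g_2-cp_1\leq g_2$ for every monomial.

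Next I would specialize $y_1=y_2=1$, which turns $X$ into the coefficient-free cluster variable $x_\dd$ of Theorem \ref{th:greedy-combinatorial} (the $d$-vector is unchanged, being independent of the coefficient system). By Laurent positivity the coefficients of $X$ are non-negative, so no cancellation occurs under this specialization and the exponents $(a_1,a_2)$ occurring in $x_\dd$ are exactly those occurring in $X$. Because the $F$-polynomial has constant term $1$, the monomial with $p_1=p_2=0$ survives, so $x_1^{g_1}x_2^{g_2}$ is a genuine monomial of $x_\dd$; together with the inequalities $a_1\geq g_1$ and $a_2\leq g_2$ this identifies $g_1$ with the minimal exponent of $x_1$ and $g_2$ with the maximal exponent of $x_2$ occurring in $x_\dd$.

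It remains to compute these two extremes from \eqref{eq:greedy-Dyck-expression}. For a non-initial cluster variable Lemma \ref{dpositive2} gives $d_1,d_2\geq 0$, so $[d_1]_+=d_1$, $[d_2]_+=d_2$, and $x_\dd=x_1^{-d_1}x_2^{-d_2}\sum_{(S_1,S_2)}x_1^{b|S_2|}x_2^{c|S_1|}$, where the exponent of $x_1$ equals $-d_1+b|S_2|$ and that of $x_2$ equals $-d_2+c|S_1|$. The compatibility condition of Definition \ref{df:compatible} is vacuous as soon as one of $S_1,S_2$ is empty, so $(\emptyset,\emptyset)$ and $(\mathcal{D}_1,\emptyset)$ are both compatible; the former attains the minimal $x_1$-exponent $-d_1$ and the latter attains the maximal $x_2$-exponent $-d_2+c|\mathcal{D}_1|=-d_2+cd_1$. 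Comparing with the previous paragraph gives $g_1=-d_1$ and $g_2=cd_1-d_2$, as claimed.

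The step I expect to be the main obstacle is the passage between the principal and the coefficient-free expansions: one has to guarantee that the monomial $x_1^{g_1}x_2^{g_2}$ singled out by homogeneity really survives the specialization $y\to 1$ and really coincides with the extremal terms of the Dyck-path sum, which is exactly what positivity together with the manifest compatibility of $(\emptyset,\emptyset)$ and $(\mathcal{D}_1,\emptyset)$ secures. A smaller point to verify separately is that the non-negativity $d_1,d_2\geq 0$ borrowed from Lemma \ref{dpositive2} is also valid in the finite-type range $bc\leq 3$ allowed by \eqref{rank2mat2}, where it is in any case classical.
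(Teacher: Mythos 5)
Your argument is correct, but it is a genuinely different route from the paper's: the paper disposes of Theorem \ref{gdrelation} in one line, as the rank-$2$ specialization of the $g$-vector/$d$-vector relation on the bipartite belt in \cite[Theorem 10.12]{fziv} (in rank $2$ every seed lies on the bipartite belt, so that theorem applies to all cluster variables). You instead rederive the relation from first principles: homogeneity under \eqref{grading} pins $g_1$ (resp.\ $g_2$) down as the minimal $x_1$-exponent (resp.\ maximal $x_2$-exponent) of the principal-coefficient cluster variable, and the Dyck-path expansion \eqref{eq:greedy-Dyck-expression} of Theorem \ref{th:greedy-combinatorial} computes those extremes as $-d_1$ and $cd_1-d_2$ via the manifestly compatible pairs $(\emptyset,\emptyset)$ and $(\mathcal{D}_1,\emptyset)$. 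The computation is sound (note that $g_1=-d_1$ already follows from the definition of the $d$-vector, since $N_{i;t}$ is not divisible by $x_1$, so the Dyck-path formula is really only needed for $g_2$), but be aware of what your shortcut costs: you import two nontrivial external inputs, namely Laurent positivity with principal coefficients in rank $2$ and the fact that the $F$-polynomial has constant term $1$; both are known for rank $2$ (e.g.\ via the positive formula of \cite{ls}, or sign-coherence), but they are strictly heavier than the single citation the paper uses, so they should be stated with explicit references rather than invoked as folklore. Your closing remark correctly handles the finite-type range $bc\leq 3$ permitted by \eqref{rank2mat2}, where Lemma \ref{dpositive} replaces Lemma \ref{dpositive2}. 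What your approach buys is a self-contained, combinatorially explicit proof that also previews the mechanism of Theorem \ref{principalformula}; what the paper's approach buys is brevity and independence from positivity.
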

\begin{proof}
This is the spacial case of \cite{fziv}*{Theorem 10.12}.
\end{proof}

Using $g$-vectors, we have the following generalization of Theorem \ref{th:greedy-combinatorial}:
\begin{theorem}\label{principalformula}
For a $d$-vector $\dd=\begin{bmatrix}d_1 \\ d_2\end{bmatrix}$, the cluster variable $x_\dd$ with the principal coefficients corresponding to $\dd$ is given by the following equation:
\begin{align}\label{principal generalization}
x_{\dd} = x_1^{-d_1}x_2^{-d_2}\sum_{(S_1,S_2)}y_1^{[d_1]_+-|S_1|}y_2^{|S_2|}x_1^{b|S_2|}x_2^{c|S_1|},
\end{align}
where the sum is over all compatible pairs $(S_1,S_2)$ in $\mathcal{D}^{[d_1]_+\times [d_2]_+}$.
\end{theorem}
\begin{proof}
When a $d$-vector is the negative, we have \eqref{principal generalization} by direct calculation. We assume that a $d$-vector is positive. For any compatible pair $(S_1,S_2)\in \mathcal{D}^{d_1\times d_2}$, let $a_1(S_1,S_2)$ and $a_2(S_1,S_2)$ be integers satisfying
\begin{align}
x_{\dd} = x_1^{-d_1}x_2^{-d_2}\sum_{(S_1,S_2)}y_1^{a_1(S_1,S_2)}y_2^{a_2(S_1,S_2)}x_1^{b|S_2|}x_2^{c|S_1|}.
\end{align}
Since $x_{\dd}$ is homogeneous by the grading \eqref{grading}, and its degree is $\gg=\begin{bmatrix}
g_1\\g_2
\end{bmatrix}=\begin{bmatrix}
-d_1\\cd_1-d_2
\end{bmatrix}$ by Theorem \ref{gdrelation}, the following equation holds for any compatible pair $(S_1,S_2)$:
\begin{align}
\begin{bmatrix}
-d_1\\cd_1-d_2
\end{bmatrix}=-\begin{bmatrix}
d_1\\d_2
\end{bmatrix}+a_1(S_1,S_2)\begin{bmatrix}
0\\c
\end{bmatrix}
+
a_2(S_1,S_2)\begin{bmatrix}
-b\\0
\end{bmatrix}
+
\begin{bmatrix}
b|S_2|\\c|S_1|
\end{bmatrix}.
\end{align}
By solving the equation, we have
\begin{align}
a_1(S_1,S_2)=d_1-|S_1|,\quad a_2(S_1,S_2)=|S_2|.
\end{align}
\end{proof}
By Theorem \ref{principalformula}, definition of the $F$-polynomials, and Remark \ref{f=drank2}, we have the following restoration formula of $F$-polynomials from $f$-vectors:
\begin{corollary}
For a $f$-vector $\ff=\begin{bmatrix}
f_1\\f_2
\end{bmatrix}$, the $F$-polynomial $F_{\ff}(\yy)$ whose maximal degree vector is $\ff$ is given by the following formula:
\begin{align}
F_\ff(y_1,y_2)=\sum_{(S_1,S_2)}y_1^{f_1-|S_1|}y_2^{|S_2|},
\end{align}
where the sum is over all compatible pairs $(S_1,S_2)$ in $\mathcal{D}^{f_1\times f_2}$.
\end{corollary}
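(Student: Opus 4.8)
The plan is to combine the explicit cluster-variable formula from Theorem \ref{principalformula} with the definition of the $F$-polynomial and the identification $\ff=[\dd]_+$ from Theorem \ref{main}. First I would recall that the $F$-polynomial is obtained from the cluster variable with principal coefficients by the specialization $x_1=\cdots=x_n=1$. Applying this to the formula \eqref{principalextension}, the factors $x_1^{-d_1}x_2^{-d_2}$, $x_1^{b|S_2|}$, and $x_2^{c|S_1|}$ all collapse to $1$, leaving precisely
\begin{align*}
F_{\dd}(y_1,y_2)=\sum_{(S_1,S_2)}y_1^{[d_1]_+-|S_1|}y_2^{|S_2|},
\end{align*}
where the sum runs over compatible pairs in $\mathcal{D}^{[d_1]_+\times[d_2]_+}$. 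Here I would use $[d_1]_+=d_1$ for non-initial $d$-vectors (non-negativity from Lemmas \ref{dpositive2} and \ref{dpositive}) and handle the initial case directly.

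Next I would translate the indexing from $\dd$ to $\ff$. By Theorem \ref{main} (2), for a cluster algebra of rank $2$ we have $\ff=[\dd]_+$, so the exponent set $[d_1]_+\times[d_2]_+$ governing the maximal Dyck path is exactly $f_1\times f_2$. Thus the Dyck path $\mathcal{D}^{[d_1]_+\times[d_2]_+}$ appearing in Theorem \ref{principalformula} coincides with $\mathcal{D}^{f_1\times f_2}$, and substituting $[d_1]_+=f_1$ into the exponent of $y_1$ yields the stated formula $F_\ff(y_1,y_2)=\sum_{(S_1,S_2)}y_1^{f_1-|S_1|}y_2^{|S_2|}$. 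The only subtlety is that the formula must be read as a function of $\ff$ rather than $\dd$: since by Theorem \ref{main} the correspondence $\dd\mapsto[\dd]_+=\ff$ is a bijection on non-initial vectors (the $d$-vectors being non-negative and determined by the cluster variable via Lemma \ref{duniquely2}), specifying $\ff$ determines $\dd$ uniquely, so the phrase ``the $F$-polynomial whose maximal degree vector is $\ff$'' is well defined.

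The main obstacle, and really the only one requiring care, is verifying that $\ff$ genuinely equals the maximal degree vector of the resulting polynomial, so that the restoration is consistent with the definition of $f$-vectors. Concretely, I would check that the maximal degree of $y_1$ in $\sum_{(S_1,S_2)}y_1^{f_1-|S_1|}y_2^{|S_2|}$ is $f_1$ — attained by the compatible pair with $S_1=\varnothing$ (the empty pair is always compatible) — and that the maximal degree of $y_2$ is $f_2$, attained by the pair with $S_2=\mathcal{D}_2$, i.e.\ $|S_2|=[d_2]_+=f_2$. This is exactly what guarantees the output is the correct $F$-polynomial and not merely a polynomial with the right indexing, and it follows cleanly from the combinatorics of compatible pairs in Definition \ref{df:compatible} together with $\ff=[\dd]_+$.
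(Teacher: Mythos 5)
Your proof is correct and takes essentially the same route as the paper: specialize $x_1=x_2=1$ in the formula of Theorem \ref{principalformula} and translate the exponents and the Dyck path via $\ff=[\dd]_+$ from Theorem \ref{main}. Your final consistency check that the maximal degree vector really is $\ff$ is harmless but not needed, since the $F$-polynomial is by definition this specialization and its maximal degree vector is by definition the $f$-vector, which equals $[\dd]_+$ by Theorem \ref{main}.
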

\begin{example}
Let $B=\begin{bmatrix}
0&4\\ -1&0
\end{bmatrix}$ and $\dd=\ff=\begin{bmatrix}
3\\2
\end{bmatrix}$. If $(S_1,S_2)\in\mathcal{D}^{3\times 2}$ is compatible, then at least one of the sets $S_1$ and $S_2$ is empty, or $(S_1,S_2)$ is one of pairs in the following list: 
\begin{align}
(\{u_1\},\{v_2\}), (\{u_2\},\{v_2\}),(\{u_3\},\{v_1\}).
\end{align}
Then we have an expression of the cluster variable $x_\dd$ corresponding to $d$-vector $\dd$ in $\Acal_\bullet(B)$ as follows: 
\begin{align}
x_{\dd}=\dfrac{x_1^8y_1^3y_2^2+2x_1^4y_1^3y_2+y_1^
3+3x_1^4x_2y_1^2y_2+3x_2y_1^2+3x_2^2y_1+x_2^3}{x_1^3x_2^2}.
\end{align}
Also we have the $F$-polynomial $F_{\ff}(\yy)$ corresponding to the $f$-vector $\ff$ as follows:
\begin{align}
F_{\ff}(\yy)=y_1^3y_2^2+2y_1^3y_2+y_1^
3+3y_1^2y_2+3y_1^2+3y_1+1.
\end{align}
\end{example}
\bibliography{myrefs}
\end{document}